\documentclass[11pt]{amsart}
\usepackage{amssymb}
\usepackage{amsmath,amssymb,amsfonts,amsthm,
latexsym, amscd, epsfig, epic,color}
\usepackage{mathrsfs}
\usepackage{eucal}
\usepackage{xspace}
\usepackage{a4wide}
\usepackage{colordvi}
\usepackage{comment}

\usepackage{xcolor}
\usepackage[pdfpagemode=UseNone,bookmarksopen=false,colorlinks=true,urlcolor=blue,citecolor=blue,citebordercolor=blue,linkcolor=blue]{hyperref}

\usepackage[top=2cm,left=2.5cm,right=2.5cm,bottom=2cm]{geometry}

\theoremstyle{plain}
\newtheorem{theorem}{Theorem}[section]

\newtheorem{lemma}{Lemma}[section]

\theoremstyle{definition}

\newtheorem{example}{Example}[section]

\theoremstyle{remark}




\newcommand{\ndR}{\mathbb{R}}



\newcommand{\sB}{\mathsf{B}}

\newcommand{\sG}{\mathsf{G}}

\newcommand{\sU}{\mathsf{U}}

\newcommand{\me}{\mathsf{e}}

\newcommand{\CRT}{\mathcal{T}_\me}

\newcommand{\convdis}{\,{\buildrel d \over \longrightarrow}\,}


\newcommand{\eqdist}{\,{\buildrel d \over =}\,}


\newcommand{\Di}{\textsc{D}}


\newcommand{\cK}{\mathcal{K}}

\newcommand{\cM}{\mathcal{M}}

\newcommand{\cR}{\mathcal{R}}
\newcommand{\cS}{\mathcal{S}}
\newcommand{\cT}{\mathcal{T}}
\newcommand{\cU}{\mathcal{U}}
\newcommand{\cV}{\mathcal{V}}

\renewcommand{\Pr}[1]{\mathbb{P}(#1)}

\newcommand{\Ex}[1]{\mathbb{E}[#1]}



\numberwithin{equation}{section}

\begin{document}
\title{Graph limits of random unlabelled $k$-trees}

\author{Emma Yu Jin and Benedikt Stufler}
\thanks{The first author was supported by the German Research Foundation DFG, JI 207/1-1, the Austrian Research Fund FWF, Project SFB F50-02/03, and is supported by FWF-MOST (Austria-Taiwan) project P2309-N35. The second author gratefully acknowledges support by the German Research Foundation DFG, STU 679/1-1 and the Swiss National Science Foundation grant number 200020\_172515.}
\address{Institut f\"{u}r Diskrete Mathematik und Geometrie, Technische Universit\"{a}t Wien, Wiedner Hauptstr. 8–10, 1040 Vienna, Austria}
\email{yu.jin@tuwien.ac.at}
\address{Institut f\"ur Mathematik, Universit\"at Z\"urich, Winterthurerstrasse 190, CH-8057 Z\"urich}
\email{benedikt.stufler@math.uzh.ch}

\begin{abstract}
We study random unlabelled $k$-dimensional trees by combining the colouring approach by Gainer-Dewar and Gessel (2014) with the cycle pointing method by Bodirsky, Fusy, Kang and Vigerske (2011). Our main applications are Gromov--Hausdorff--Prokhorov and  Benjamini--Schramm limits, that describe their asymptotic geometric shape on a global and local scale as the number of hedra tends to infinity.
\end{abstract}

\maketitle


\section{Introduction and main results}

	A {\em $k$-tree}, or \emph{$k$-dimensional tree}, may be defined recursively: it is either a complete graph on $k$
	vertices or a graph obtained from a smaller $k$-tree by adjoining a
	new vertex together with $k$ edges, connecting it to a $k$-clique of
	the smaller $k$-tree. This concept generalizes in a natural way graph-theoretic trees, which correspond to the special case $k=1$. We may distinguish $k$-trees whose vertices are labelled by elements of some fixed set, and \emph{unlabelled} $k$-trees, which are $k$-trees considered up to graph isomorphism. It is custom to index $k$-trees by their number of $(k+1)$-cliques, that are called \emph{hedra} in this context.  Thus, the number of vertices in a $k$-tree having $n$ hedra is given by $n+k$. For instance, there are $5$ different $2$-trees with $4$ hedra; see Figure~\ref{F:1}. A $k$-clique in a $k$-tree is usually called a \emph{front}. 
	\begin{figure}[htbp]
		\begin{center}
			\includegraphics[scale=0.8]{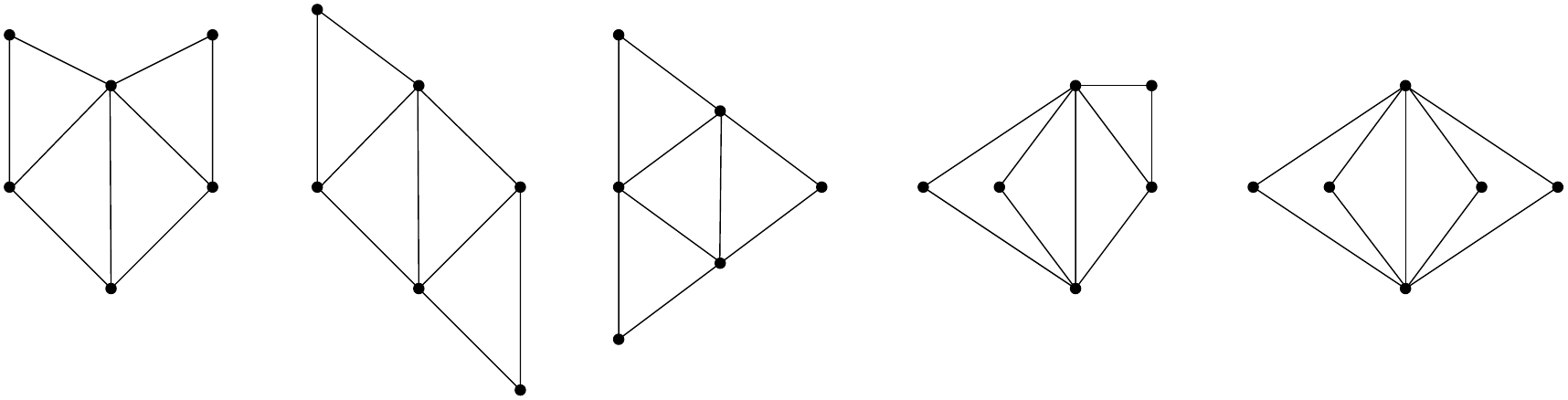}
			\caption{All unlabelled $2$-trees with $4$ hedra (triangles)\label{F:1}}
		\end{center}
	\end{figure}

	 The counting problem of the class of $k$-dimensional trees has a long history.  The number of labelled $k$-trees over a fixed set of vertices was obtained by Beineke, Pippert, Moon and Foata \cite{BP:69,Moon,Foata}, and the enumeration of unlabelled $1$-trees is a classical result attributed to Otter~\cite{Ot}. Unlabelled $2$-trees were counted by Harary and Palmer \cite{HP:68,HP:73} and Fowler {\it et al.}
	\cite{Fowler:02} using the dissimilarity characteristic theorem.
	The general case was a long-standing open problem, which was solved
	recently by Gainer-Dewar \cite{Gai} using $\Gamma$-species. A simpler proof that combines vertex-colourings with hedra-labelings was later discovered by  Gessel
	and Gainer-Dewar~\cite{GeGai}. The advantage of this approach is that it breaks the symmetry of $k$-trees and avoids the use of compatible cyclic orientation of each $(k+1)$-clique in a $k$-tree. Based on the simplified generating functions from \cite{GeGai}, Drmota and J. \cite{DJ:14} provided a systematic asymptotic
	analysis of $k$-trees using singularity analysis.
	
	In the present work we establish a substraction-free combinatorial decomposition of unlabelled $k$-dimensional trees.
	The motivation for this comes from the fact that all prior analysis of unlabelled $k$-trees are in one form or another based on dissymmetry theorems. These constitute double-counting arguments in terms of various rooted-versions of such objects. The substraction operations in the associated equations of generating series severly complicate a probabilistic analysis, as the corresponding Boltzmann sampling procedures have to employ a costly rejection process.  In order to tackle this, we combine the framework by Gessel and Gainer-Dewar~\cite{GeGai} and the cycle-pointed method developed by Bodirsky, Fusy, Kang and Vigerske \cite{BFKV:07}. The latter approach is based on the idea to consider graphs marked at a cyclic permutation of their vertices, such that the cycle appears in at least one automorphism. 
	
	Having a rejection-free sampling procedure at hand, we conduct a probabilistic study of 
	the random $k$-tree $\sU_n = \sU_{n,k}$ chosen uniformly at random among all unlabelled $k$-trees with $n$ hedra. A similar approach was also used in \cite{2014arXiv1412.6333S} for unlabelled trees with vertex-degree restrictions, and the present work intersects with this paper precisely for the case of unrestricted $1$-trees. The framework of the present work is not suitable to accomodate vertex-degree restrictions of $1$-trees and we make use of results for $\cR$-enriched trees~\cite{SENR}. The decomposition of~\cite{2014arXiv1412.6333S} is not suitable for $k$-trees if $k \ge 2$.
	
	Our first application establishes the Brownian tree $(\CRT, d_{\CRT}, \mu_{\CRT})$ as Gromov--Hausdorff--Prokhorov scaling limit of the random unlabelled $k$-tree $\sU_n$.
\begin{theorem}
	\label{te:ghlimit} 
	\label{te:A}
	Let $\mu_n$ denote the uniform measure on the set of vertices of $\sU_n$. There is a constant $c_k>0$ such that
	\begin{align*}
		(\sU_n, c_k n^{-1/2} d_{\sU_n}, \mu_n) \convdis (\CRT, d_{\CRT}, \mu_{\CRT})
	\end{align*}
	in the Gromov--Hausdorff--Prokhorov sense.
\end{theorem}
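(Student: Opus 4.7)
I would encode $\sU_n$ by a random enriched plane tree $\cT_n$ whose rescaled metric and measure converge to the CRT, and then transfer the limit from $\cT_n$ to $\sU_n$ by a bi-Lipschitz metric comparison. The starting point is the combinatorial decomposition promised in the introduction: combining the Gessel--Gainer-Dewar coloring of vertices with the cycle-pointing method of \cite{BFKV:07} produces a substraction-free identity for the class of cycle-pointed colored $k$-trees. By the cycle-pointing lemma, the uniform $\sU_n$ is obtained, after forgetting the color and the marked cycle, from a uniform cycle-pointed colored $k$-tree of size $n$. The associated rejection-free Boltzmann sampler then expresses $\sU_n$ as a measurable function of a random rooted plane tree $\cT_n$ that plays the role of the dual tree of fronts and hedra.

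Next, I would identify $\cT_n$ with a conditioned simply generated tree. The algebraic generating series required are precisely those analyzed by singularity analysis in \cite{DJ:14}; after a suitable exponential tilt the offspring distribution becomes critical with finite (in fact exponential) moments, placing $\cT_n$ in the universality class of conditioned Galton--Watson trees with finite-variance offspring. Aldous's invariance principle, together with its GHP-strengthening, then yields
\begin{equation*}
(\cT_n, c'_k n^{-1/2} d_{\cT_n}, \nu_n) \convdis (\CRT, d_{\CRT}, \mu_{\CRT})
\end{equation*}
for an explicit constant $c'_k$ and the uniform counting measure $\nu_n$ on the nodes of $\cT_n$.

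The main obstacle is the final metric transfer. The upper bound $d_{\sU_n}(u,v) \le d_{\cT_n}(H(u),H(v)) + O(1)$ is immediate, since each hedron has diameter $1$ in the graph metric. The reverse bound is delicate: a graph-geodesic in $\sU_n$ can shortcut a long dual-tree path by traversing a single vertex that belongs to many non-adjacent fronts. To control this, I would run a second-moment argument exploiting the subexponential offspring tail to show that the number of hedra containing any fixed vertex is tight, and that the largest such \emph{clump} met along a typical geodesic is $o(\sqrt{n})$ with high probability, so that the two distances differ by a term of lower order than $\sqrt{n}$ uniformly in the relevant vertex pairs. Coupling the vertex-measure $\mu_n$ with the node-measure $\nu_n$ is then routine, since each hedron contributes a bounded number of genuinely new vertices on top of its parent front. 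The final constant $c_k$ is determined by the variance of the offspring distribution together with the asymptotic number of new vertices per hedron, and combining all three steps gives the claimed GHP-convergence.
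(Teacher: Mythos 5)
Your proposal does capture one genuine ingredient of the paper's strategy (cycle pointing to remove the subtraction in the dissymmetry theorem, then transferring a known rooted scaling limit), but it stumbles exactly into the pitfall the paper warns against, and it omits the part of the argument that constitutes the actual proof. Your opening step asserts that the uniform $\sU_n$ is obtained by forgetting the colour and the marked cycle of a uniform cycle-pointed \emph{coloured} $k$-tree. This is false: the $n$-to-$1$ correspondence given by cycle pointing survives the quotient by $\mathfrak{S}_n$ but breaks when one further quotients by the colour group $\mathfrak{S}_{k+1}$ (the paper notes a counterexample already for $n=5$, $k=1$). The correct move is to cycle-point the \emph{uncoloured} hedron-labelled $k$-trees, obtaining $\cV_n = \cV_n^{(1)}\sqcup\cV_n^{(2)}\sqcup\cV_n^{(3)}$ according to whether the cycle center is a marked hedron, a hedron, or a front. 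The bulk of the proof then consists in showing, via Burnside's lemma and the singularity analysis of Lemma~\ref{le:analytic}, that the contributions carrying non-trivial colour symmetries ($\cV_n^{(2)}$, $\cV_n^{(3),\mathrm{sym}}$, and the non-identity cycle types in $\cV_n^{(1)}$) are exponentially unlikely, so that the dominant object is, up to an $O_p(1)$-sized decoration, a $k$-tree rooted at a front of \emph{distinguishable} vertices (a $\bar{B}_{1^k}$- or $C_{1^k}$-object). None of this reduction appears in your plan.

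Two further steps would fail as written. First, the coding tree of an unlabelled $k$-tree is not a conditioned simply generated tree: its offspring are unordered multisets and the governing equation is the P\'olya-type relation $\bar{B}_{1^k}(z)=z\exp\bigl(k\sum_{i\ge1}\bar{B}_{1^k}(z^i)/i\bigr)$, so Aldous's invariance principle for critical Galton--Watson trees does not apply directly; this is precisely why the paper outsources the rooted case to the theory of unlabelled enriched trees (Lemma~\ref{le:base}, \cite{SENR}) rather than re-deriving it. Second, your metric transfer claims the graph distance and the coding-tree distance agree up to an additive $o(\sqrt{n})$ error, which would force $\sU_n$ and its coding tree to share the same scaling constant. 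For $k\ge2$ this is wrong: a geodesic in a $k$-tree traverses a dual-tree path at an asymptotic speed given by a non-trivial constant, which is the origin of the factor $k\sum_{i=1}^k i^{-1}$ in the expression \eqref{eq:cc} for $c_k$. An additive-error comparison therefore cannot produce the stated constant, and the lower bound you describe does not hold.
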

We refer the reader to \cite[Sec. 6]{Mi} for details on scaling limits of random graphs. The scaling constant is given by	
\begin{align}
	\label{eq:cc}
c_k = k \sum_{i=1}^k \frac{1}{i}\sqrt{1 + k \sum_{i=2}^\infty \bar{B}_{1^k}'(\rho_k^i)\rho_k^i} 
\end{align}
with $\bar{B}_{1^k}(z)$ the unique power-series satisfying
$
	\bar{B}_{1^k}(z) = z \exp\left(k \sum_{i=1}^\infty \frac{\bar{B}_{1^k}(z^i)}{i}\right),
$ and $\rho_k$ denoting its radius of convergence. See Table~\ref{tb:constants} for numerical approximations. It follows from~\cite[Thm. 3]{DJ:14} that $\rho_k = \frac{1}{ek} - \frac{1}{2e^3 k^2} + O(\frac{1}{k^3})$ and $k \sum_{i=2}^\infty \bar{B}_{1^k}'(\rho_k^i)\rho_k^i = O(\frac{1}{k})$ as $k$ becomes large, yielding \[c_k  = (1 + O(k^{-1}))k  \sum_{i=1}^k \frac{1}{i} .
\]

{
	\begin{table}[t]
		\begin{center}
			\tiny
			\begin{tabular}{ | l | l | l | l |  }
				\hline
				k &$c_k$&$\rho_k$& $\sqrt{1 + k \sum_{i=2}^\infty \bar{B}_{1^k}'(\rho_k^i)\rho_k^i}$\\
				\hline
				1 &  $1.102725$&    $0.338321$&  $1.102725$ \\
				2 &  $3.126190$ &   $0.177099$&  $1.042063$\\
				3 &  $5.643857$&    $0.119674$ & $1.026155$ \\
				4 &  $8.491071$ &   $0.090334$ & $1.018928$\\
				5 & $11.585821$ &   $0.072539$ & $1.014816$ \\
				6 & $14.878854$ &   $0.060597$ & $1.012166$\\
				7 & $18.337291$ &   $0.052031$ & $1.010319$ \\
				8 & $21.937615$ &   $0.045585$ & $1.008957$ \\		
				9 & $25.662173$ &   $0.040561$ & $1.007912$ \\				
				10 & $29.497218$ &  $0.036533$ & $1.007085$ \\						
				\hline
			\end{tabular}
		\end{center}
		\caption{Numerical approximations of constants for unlabelled $k$-trees}
		\label{tb:constants}
	\end{table}
}

The diameter $\Di(\cdot)$ is a Gromov--Hausdorff continuous functional. Hence  Theorem~\ref{te:ghlimit} implies that
\[
	c_k n^{-1/2} \Di(\sU_n) \convdis \Di(\CRT) \eqdist \sup_{0 \le t_1 \le t_2 \le 1}(\me(t_1) + \me(t_2) - 2 \inf_{t_1 \le t \le t_2} \me(t)),
\]
with $\me = (\me_t)_{0 \le t \le 1}$ denoting Brownian excursion of length $1$, see  Aldous \cite[Ch. 3.4]{MR1166406}. Let $v^1$ and $v^2$ denote two independently and uniformly selected vertices. The Gromov--Hausdorff--Prokhorov convergence of Theorem~\ref{te:ghlimit} implies that
\begin{align}
\label{eq:ray}
2 c_k d_{\sU_n}(v^1, v^2)/ \sqrt{n} \convdis \text{Rayleigh}(1)
\end{align} for a Rayleigh-distributed limit, given by its probability density $x\exp(-x^2/2)$. In fact, it also implies a scaling limit for the vector of pairwise distances for any finite fixed number of uniformly and independently sampled vertices.  In order to deduce convergence of the moments, it is necessary to verify $p$-uniform integrability of the diameter for arbitrarily large $p$. This is ensured by the following sharp tail-bound.

\begin{theorem}
	\label{te:dtb} 
	\label{te:B}
	There are constants $C,c>0$ such that for all $n \ge 1$ and $x \ge 0$
	\[
		\Pr{\Di(\sU_n) \ge x} \le C \exp(-c x^2/n).
	\]
\end{theorem}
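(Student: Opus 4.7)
The plan is to derive the sub-Gaussian diameter tail of $\sU_n$ from a matching tail bound for an underlying conditioned Galton--Watson-type tree that encodes its recursive structure. First, I would exploit the rejection-free sampler constructed in the paper via the cycle-pointed colouring decomposition. This decomposition identifies $\sU_n$, up to a small cycle-pointed root region, with a simply generated random tree $T_n$ whose nodes correspond to hedra and fronts glued together through the recursive construction of $k$-trees. Any geodesic in $\sU_n$ passes through a sequence of hedra, each of graph-diameter at most $k$, and consecutive hedra along the geodesic are neighbours in $T_n$. Hence one obtains a deterministic comparison
$$\Di(\sU_n) \le C_k \Di(T_n) + R_n,$$
for a constant $C_k$ depending only on $k$, where $R_n$ captures the extra contribution of the cycle-pointed root.

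Second, I would verify that $T_n$ is, up to the root, distributed as a conditioned Galton--Watson tree whose offspring law admits finite exponential moments. The exponential-moment property is the standard consequence of the square-root type singularity of the generating series $\bar{B}_{1^k}(z)$ at $\rho_k$; this is the same input that drives the scaling limit of Theorem~\ref{te:A} and is made explicit in the singularity analysis of Drmota and Jin. With this in hand, the sharp sub-Gaussian diameter bound of Addario-Berry, Devroye and Janson for conditioned Galton--Watson trees with exponential-moment offspring distribution yields
$$\Pr{\Di(T_n) \ge x} \le C' \exp(-c' x^2 / n)$$
for constants $C', c' > 0$ independent of $n$ and $x \ge 0$.

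Third, to deduce the theorem, I would show that the root region $R_n$ arising from cycle-pointing has exponential tails uniformly in $n$. In the cycle-pointed decomposition, the root records a chain of automorphism-invariant blocks whose length is governed by a subcritical substitution scheme, so a direct Boltzmann-sampler computation gives $\Pr{R_n \ge x} \le C'' e^{-c'' x}$ uniformly in $n$. Combining the bound on $\Di(T_n)$ with the exponential tail of $R_n$ and adjusting the constants yields the claim.

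The main obstacle will be the rigorous identification of $T_n$ as a conditioned Galton--Watson tree within the coloured + cycle-pointed framework of Gessel and Gainer-Dewar, together with the verification that the cycle-pointed component is of negligible size. Both issues are technical rather than conceptual, and can be handled by the generating function estimates already available in the paper and in Drmota--Jin; once they are in place, the reduction to the Addario-Berry--Devroye--Janson inequality is immediate and delivers the desired Gaussian tail.
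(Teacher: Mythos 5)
Your overall architecture --- reduce $\sU_n$ to a rooted tree-like core plus a small remainder, apply a known sub-Gaussian diameter bound to the core, and give the remainder exponential tails --- is the same shape as the paper's argument. But the pivotal step, identifying the hedron-tree $T_n$ of a random \emph{unlabelled} $k$-tree as a conditioned Galton--Watson tree and invoking the Addario-Berry--Devroye--Janson inequality, does not go through. Because the $k$-tree is unlabelled, the relevant front-rooted building blocks are counted by $\bar{B}_{1^k}(z) = z\exp\bigl(k\sum_{i\ge1}\bar{B}_{1^k}(z^i)/i\bigr)$; the P\'olya substitution terms $\bar{B}_{1^k}(z^i)$ for $i\ge 2$ are precisely the signature of symmetries that destroy the product structure of a simply generated tree. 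The subtrees hanging off a vertex of $T_n$ are \emph{not} conditionally independent given their sizes, so $T_n$ is not a conditioned Galton--Watson tree and the ADJ bound cannot be applied directly. Transferring sub-Gaussian height/diameter bounds from conditioned Galton--Watson trees to unlabelled enriched (P\'olya-type) trees is exactly the nontrivial content that the paper outsources to Lemma~\ref{le:base}, quoted from \cite{SENR}; treating it as ``technical rather than conceptual'' skips the hardest part of the proof.

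Two smaller points. First, the cycle-pointed decomposition does not produce a single ``core tree plus root region'' picture: it splits $\cV_n$ into three parts, and two of them ($\cV_n^{(2)}$ and the symmetric subclass $\cV_n^{(3),\text{sym}}$) carry global symmetry constraints and are handled in the paper not by a size bound on a root region but by showing they are exponentially unlikely via radius-of-convergence comparisons with $\sum_{\lambda\ne 1^{k+1}}B_\lambda(z)$ and $\sum_{\mu\ne 1^k}C_\mu(z)$. (These parts are still compatible with the claimed tail bound, since $\Di(\sU_n)\le n+k$ forces $e^{-cn}\le e^{-cx^2/(n+k)}$ on the relevant range, but your proposal should say so rather than fold everything into $R_n$.) Second, in the remaining ``decoupled'' part the paper does exactly what your step three envisions: the marked part $M_{1^k}$ has radius of convergence strictly larger than $\rho_k$, hence exponentially small size, and the unmarked part is a $\bar{B}_{1^k}$- or $C_{1^k}$-object to which Lemma~\ref{le:base} applies, with a final union bound over the $k+1$ components glued at the root hedron in the $B_{1^{k+1}}$ case. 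So your plan becomes correct once you replace the Galton--Watson identification by the tail bound for unlabelled front-rooted $k$-trees from \cite{SENR} (or supply an independent proof of it).
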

Thus, for any fixed integer $p \ge 1$ it follows that
\[
	\Ex{\Di(\sU_n)^p} \sim c_k^{-p} n^{p/2} \Ex{\Di(\CRT)^p} \quad \text{and} \quad \Ex{  d_{\sU_n}(v^1, v^2)^p } \sim  n^{p/2} 2^{-p/2} c_k^{-p}\Gamma(1 + p/2).
\]
The moments of the diameter are known and given by
\begin{align*}
\Ex{\Di(\CRT)} &= \frac{4}{3}\sqrt{\pi/2}, \quad \Ex{\Di(\CRT)^2} = \frac{2}{3}\left(1 + \frac{\pi^2}{3}\right), \quad \Ex{\Di(\CRT)^3} = 2 \sqrt{2\pi}, \\
\Ex{\Di(\CRT)^k} &= \frac{2^{k/2}}{3} k(k-1)(k-3) \Gamma(k/2)(\zeta(k-2) - \zeta(k)) \quad \text{for $k \ge 4$}.
\end{align*}
Here $\zeta$ refers to the Riemann's zeta function, and Gamma to Euler's gamma function. See \cite[Sec. 3.4]{MR1166406} and \cite[Sec. 1.1]{outer}.

The second main application is a  local weak limit for $\sU_n$ that describes the asymptotic behaviour of the $r$-neighbourhoods $U_r(\sU_n, v^*)$ of a uniformly at random selected vertex $v^*$ of the graph $\sU_n$. We even obtain total variational convergence of these neighbourhoods when $r = r_n$ depends on $n$ and satisfies $r_n = o(\sqrt{n})$.

\begin{theorem}
	\label{te:bslimit} 
	\label{te:C}
	The random unlabelled $k$-tree $\sU_n$ converges in the Benjamini--Schramm sense towards a random infinite $k$-tree $\hat{\sU}$. For each sequence $r_n = o(\sqrt{n})$ it holds that
	\begin{align*}
		d_{\textsc{TV}}(U_{r_n}(\sU_n, v_n), U_{r_n}(\hat{\sU})) \to 0,
	\end{align*}
	with $v_n$ denoting a uniformly selected vertex of $\sU_n$.
\end{theorem}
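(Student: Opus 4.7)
The plan is to combine the substraction-free decomposition developed in this paper with the standard local limit theory for size-conditioned simply generated trees. Through the cycle-pointed Gainer--Dewar--Gessel framework, an unlabelled $k$-tree with $n$ hedra is represented by a rooted combinatorial tree whose nodes correspond to hedra and whose edges correspond to the sharing of a front; conditioning on $n$ hedra realises this as a simply generated tree with a critical offspring law whose variance is the one that already underlies Theorem~\ref{te:A}. The limit $\hat{\sU}$ is then obtained by building the associated Kesten-type infinite tree (a semi-infinite spine of size-biased nodes carrying independent critical branches on their remaining sides) and inflating each front back into a hedron according to the same decoration rules used to construct $\sU_n$ from its decomposition.

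First I would establish plain Benjamini--Schramm convergence. A uniformly selected vertex $v_n$ of $\sU_n$ corresponds, via the decomposition, to a uniformly selected node of the underlying rooted tree, up to a bounded re-weighting because each tree-node contributes only finitely many graph-vertices. The radius-$r$ neighbourhood in $\sU_n$ is determined by a subtree of the decomposition of bounded height-blow-up around the marked node, and the classical fact that a large size-conditioned simply generated tree seen from a uniformly random node converges to its Kesten limit then yields $U_r(\sU_n, v_n) \convdis U_r(\hat{\sU})$ for every fixed $r$.

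For the quantitative statement with $r_n = o(\sqrt{n})$, the strategy is a coupling argument between $U_{r_n}(\sU_n, v_n)$ and $U_{r_n}(\hat{\sU})$ along the spine of the infinite tree. The key input is that the radius-$r_n$ neighbourhood of $v_n$ depends only on the portion of the decomposition tree within height $O(r_n)$ of the marked node; by Theorem~\ref{te:A} heights are of order $\sqrt{n}$, and by the Gaussian tail of Theorem~\ref{te:B} large deviations of the diameter decay fast enough that provided $r_n = o(\sqrt{n})$ the explored region reaches neither the root nor the ``boundary'' of the finite tree with probability tending to one. Hence the conditioning on the total number of hedra is invisible to the neighbourhood, and the total variation distance tends to zero.

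The main obstacle will be the bookkeeping in translating between graph-neighbourhoods in $\sU_n$ and subtree-neighbourhoods in the cycle-pointed decomposition: the cyclic orientations that the cycle-pointing method puts on fronts must be shown to contribute no asymptotic randomness that could obstruct the coupling, and the weighting of a uniformly random graph-vertex versus a uniformly random tree-node must be handled via a size-biasing of the offspring law. Once this dictionary is in place, the threshold $r_n = o(\sqrt{n})$ is exactly the one allowed by the invariance principle that already drives Theorem~\ref{te:A}, so the sharp $o(\sqrt{n})$ range drops out without further work.
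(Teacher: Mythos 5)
There is a genuine gap at the very first step. Your argument rests on the claim that, after cycle pointing, an unlabelled $k$-tree with $n$ hedra is ``realised as a simply generated tree with a critical offspring law'', so that the classical Kesten limit for size-conditioned Galton--Watson trees applies. This is not true for unlabelled structures: the objects here are orbits under the simultaneous actions of $\mathfrak{S}_n$ (hedron relabelling) and $\mathfrak{S}_{k+1}$ (recolouring), i.e.\ P\'olya-type trees, and such classes do not admit a simply generated representation (already for $k=1$, unlabelled trees are not simply generated). The enumeration goes through Burnside's lemma and the system \eqref{E:B1}--\eqref{E:BtoC}, and the neighbourhood of a typical vertex cannot be read off from an i.i.d.\ offspring law. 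The actual content of the paper's proof is precisely the reduction you skip: using the decomposition \eqref{eq:dec}, one shows (Lemmas~\ref{le:v1}, \ref{le:v2}, \ref{le:v3}) that all symmetric configurations --- cycle centers at a hedron, the class $\cV^{(3), \text{sym}}_n$, and colour types $\lambda \ne 1^{k+1}$, $\mu \ne 1^k$ --- are exponentially unlikely, because the corresponding series $B_\lambda$, $C_\mu$ have radius of convergence strictly larger than $\rho_k$. What survives is, up to an $O_p(1)$-sized marked part, a $k$-tree rooted at a front or hedron of \emph{distinguishable} vertices, and for those the total-variation local limit in the full range $r_n = o(\sqrt{n})$ is imported from the enriched-tree framework of \cite{SENR} via Lemma~\ref{le:base}; it is not derived from a Galton--Watson spine coupling here.

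A second, independent problem is your quantitative step: you propose to deduce $d_{\textsc{TV}}(U_{r_n}(\sU_n,v_n), U_{r_n}(\hat{\sU})) \to 0$ from the invariance principle of Theorem~\ref{te:A} together with the Gaussian diameter tail of Theorem~\ref{te:B}. Those statements control global distances in distribution but say nothing about the conditional law of the unexplored part given the explored ball, which is what a total variation bound requires; one needs a local limit theorem for the remaining mass after exploring $O(r_n)$ generations, and that is exactly what Lemma~\ref{le:base} supplies and what your sketch does not. Without the exponential-rarity estimates for the symmetric classes and without the rooted-case input from \cite{SENR}, neither the identification of $\hat{\sU}$ nor the $o(\sqrt{n})$ threshold is justified.
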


This strengthened form of convergence is best-possible. Theorem~\ref{te:ghlimit} asserts that the diameter of the random unlabelled $k$-tree $\sU_n$ has order $\sqrt{n}$. Since  the diameter of $\hat{\sU}$ is almost surely infinite, the local convergence of $\sU_n$ towards $\hat{\sU}$ fails for $r_n$-neighbourhoods if $n^{-1/2}r_n$ does not converge to zero.

The cycle pointing approach allows us also to recover the expression for the asymptotic number of unlabelled $k$-trees with $n$ hedra obtained by Drmota and J.~\cite[Thm. 3]{DJ:14}. See Section~\ref{sec:conclusion} below for details.

It is important to keep in mind that in the present work we treat~\emph{unlabelled} $k$-trees, whose study is severely complicated by the presence of symmetries. Our results parallel a list of properties of random labelled $k$-trees, but do not encompass them and are not encompassed by them. The Rayleigh distribution has been observed to arise as limit of the distance of independent random vertices in random labelled $k$-trees by Darrasse and Soria~\cite{Dar}, but the scaling constant of \eqref{eq:ray} differs from the labelled case. Drmota, J., and S.~\cite{DJS:16} gave a scaling limit for random labelled $k$-trees, of course also with a different scaling constant, and S.~\cite{Streelike} established a Benjamini--Schramm limit that describes the asymptotic behaviour of the vicinity of a typical vertex in random labelled $k$-trees.

\section*{Notation}
Throughout, we set $[n] = \{1, 2, \ldots, n\}$ for all integers $n \ge 0$. The random variables appearing in this paper are either canonical or defined on a common probability space whose measure we denote by $\mathbb{P}$.  All unspecified limits are taken as $n$ becomes large.  We let $\convdis$ denote convergence in distribution, and denote equality in distribution by $\eqdist$. The total variation distance of measures and random variables is denoted by $d_{\textsc{TV}}$.  An event (that depends on $n$) holds with high probability, if its probability tends to $1$ as $n$ tends to infinity. We say it is \emph{exponentially unlikely} if there are constants $C,c>0$ such that its probability is bounded by $C \exp(-cn)$ for all $n$. Likewise, we say it is \emph{exponentially likely} if its complement is exponentially unlikely. For any integer $n \ge 0$ and any power series $f(z)$ we let $[z^n]f(z)$ denote the coefficient of $z^n$ in $f(z)$.





\section{Gainer-Dewar's and Gessel's decomposition}\label{S:enum1}

\subsection{Vertex colourings, hedron labelings, and a bijection with coding trees}
\label{sec:bij}
We recall some results and terminology from \cite{Gai,GeGai}.
Any two hedra $h_1$ and $h_2$ in a $k$-tree that intersect at a front $f$ are termed \emph{adjacent}. If this is the case, then a front $f_1$ of $h_1$ and a front $f_2$ of $h_2$ are called \emph{mirror with respect to $f$} if $f_1 \cap f = f_2 \cap f$.

A {\em coloured hedron-labelled $k$-tree} with $n$ hedra is a $k$-tree where the hedra are labelled by distinct integers from $[n]$ and the fronts are coloured with integers from $[k+1]$. We require that any two distinct fronts  that belong to the same hedron must have distinct colours, and any two distinct fronts that are mirror with respect to some other front must have the same colour. This way, the $k+1$ fronts belonging to any single hedron are coloured with distinct integers from $[k+1]$. See Figure~\ref{F:2} for two examples in the special case $k=2$, where labels are denoted by boxed integers.

\begin{figure}[t]
	\begin{center}
		\includegraphics[scale=1.0]{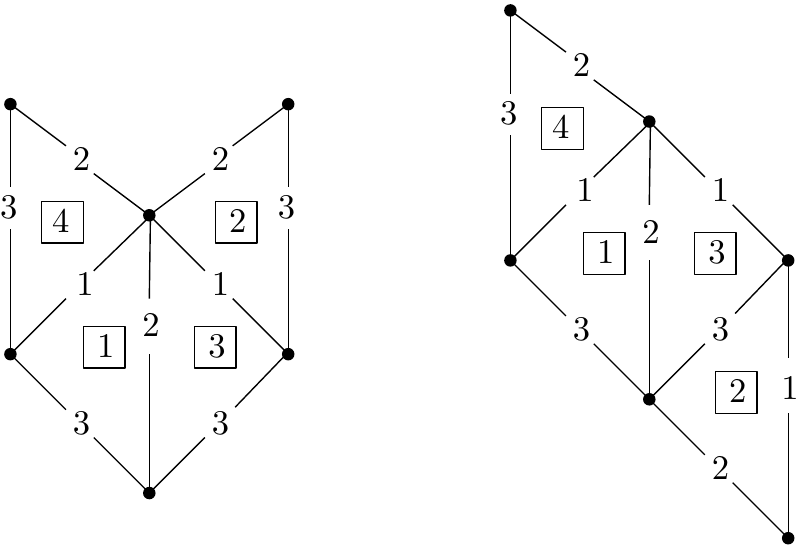}
		\caption{Two coloured hedron-labelled $2$-trees with $4$ hedra\label{F:2}}
	\end{center}
\end{figure}

It is not hard to see that the colours of all fronts of any single hedron already determine the colours of all other fronts in the $k$-tree. However, the total number of front-colourings may vary according to the $k$-tree we consider.

We now introduce {\em $k$-coding trees}. A (coloured and labelled) $k$-coding tree is an unordered tree with a proper bipartition of its vertex set into white and black vertices. We require that each black vertex has precisely $k+1$ white neighbours. The $n$ black vertices are labelled with distinct integers from $[n]$, and to each white-vertex we assign a colour from $[k+1]$, such that each black vertex has precisely one neighbour with colour $i$ for all $i \in [k+1]$.

There is a bijection $\phi$ between the set $\cK_{n,k}$ of coloured hedron-labelled $k$-trees with $n$ hedra, and the set $\cT_{n,k}$ of (coloured and labelled) coding trees with $n$ black vertices. The  proof is analogous to \cite[Thm. 3.4]{Gai}:

To construct a $k$-coding tree from a coloured hedron-labelled $k$-tree, we assign to each hedron a black-vertex with the same label and to each front a white vertex with the same colour. We connect a white vertex with a black vertex by an edge if the front corresponding to the white vertex is a subset of the hedron corresponding to the black vertex.

For the inverse map, note that in order to construct a $k$-tree from a coding tree we  require knowledge of the colouring. There are multiple ways to glue two front-coloured hedra together at a specified front colour, but only one way such that afterwards any pair for fronts that are mirror with respect to the resulting shared front have the same colour. See for example Figure~\ref{F:3} for the $2$-coding trees that correspond to the  $2$-trees in Figure~\ref{F:2}.

For any integer $n \ge 0$ we let $\mathfrak{S}_n$ denote the the symmetric group of degree $n$. The  groups $\mathfrak{S}_n$ and $\mathfrak{S}_{k+1}$ both operate on  the set $\cK_{n,k}$ of coloured and labelled $k$-trees, and the two actions commute. This induces an action of the group $\mathfrak{S}_n$ on the set of orbits $\cK_{n,k} / \mathfrak{S}_{k+1}$, that may be identified with $k$-trees on unlabelled vertices with labels on the hedra.

Any graph isomorphism between $k$-trees also induces a bijection between their sets of hedra. Thus, any two hedron-labelled $k$-trees are identical as unlabelled graphs if and only if one may obtained from the other via relabelling of hedra. Thus, the $\mathfrak{S}_n$-orbits of the induced action correspond precisely to the unlabelled $k$-trees with $n$ hedra. 

Since the two actions on $\cK_{n,k}$ commute, it follows that there is also a canonical correspondence between unlabelled $k$-trees and the collection of orbits of the induced group action of $\cS_{k+1}$ on the set of orbits $\cK_{n,k} / \mathfrak{S}_{n}$. Elements of $\cK_{n,k} / \mathfrak{S}_{n}$ correspond to $k$-trees that are unlabelled but coloured. We refer to the orbits of $\cS_{k+1}$ on this set as \emph{colour-orbits of unlabelled $k$-trees}.

As the bijection $\phi$ is compatible with the actions of both groups $\mathfrak{S}_n$ and $\mathfrak{S}_{k+1}$, this reduces the study of unlabelled $k$-trees to the study of colour-orbits of unlabelled $k$-coding trees, that is, orbits of the induced action of $\mathfrak{S}_{k+1}$ on the collection of orbits $\cT_{n,k} / \mathfrak{S}_{n}$.
\begin{figure}[t]
	\begin{center}
		\includegraphics[scale=1.0]{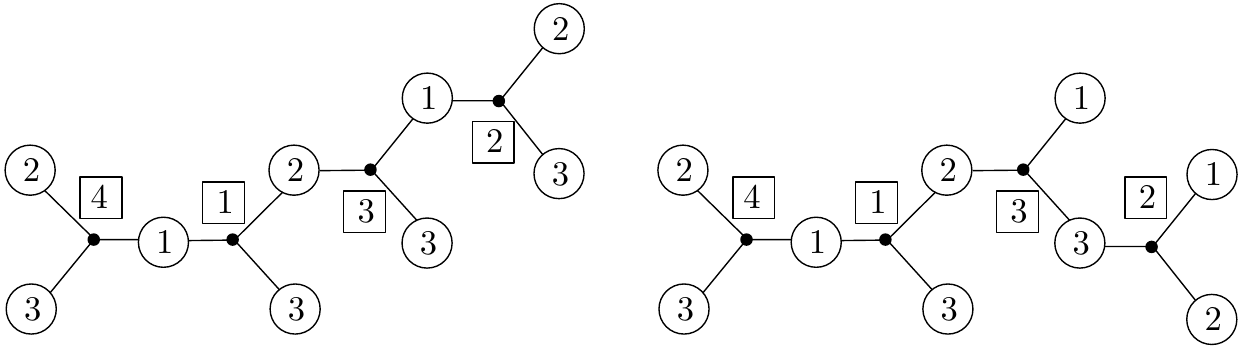}
		\caption{Two $2$-coding trees that correspond to the $2$-trees in Figure~\ref{F:2}.\label{F:3}}
	\end{center}
\end{figure}

\subsection{Burnside's Lemma}
\label{sec:burn}
The enumeration of colour-orbits of unlabelled $k$-trees and $k$-coding trees is undertaking using Burnside's Lemma, which we briefly recall in this section. Given a permutation $\sigma\in\mathfrak{S}_m$ its cycle type $\lambda = (\lambda_i)_{1 \le i \le m}$ 
is defined by letting $\lambda_i$ denote the number of cycles of length $i$ in the decomposition of $\sigma$ into a product of disjoint cycles. It is custom to use the formal notation
$
\lambda =(1^{\lambda_1}\,2^{\lambda_2}\,\cdots\, m^{\lambda_m}),
$ 
and we will often drop the parentheses when there is no risk of confusion.
Since  $m=\lambda_1+2\lambda_2+\cdots+m\lambda_m$ we say that $\lambda$ is a partition of $m$ and denote this by this by $\lambda \vdash m$. 
We set
\begin{align}\label{E:zlam}
z_\lambda = 1^{\lambda_1} \lambda_1! 2^{\lambda_2}
\lambda_2! \cdots m^{\lambda_m} \lambda_m!,
\end{align}
so that
$m!/z_\lambda$ is the number of permutations in
$\mathfrak{S}_m$ of cycle type $\lambda$. Moreover, for any $d \ge 1$ we let $\lambda^d$ denote the cycle type of the $d$-th power of a fixed permutation with type~$\lambda$.

	We let $A \subset \ndR[[z]]$ denote the subset of all formal power series whose coefficients are non-negative. Suppose that we are given a non-empty set $S$ together with a weight-function $\omega: S \to A$, such that the sum
	$
	\sum_{s \in S} \omega(s)
	$
	is well-defined in $A$. That is, for any $n \ge 0$ the coefficients $([z^n]\omega(s))_{s \in S}$ form a summable family of non-negative real numbers. Suppose that we are additionally given a group-action of the symmetric group $\mathfrak{S}_m$ on $S$ that preserves weights.  Thus, all elements of a common orbit $O$ have the same $\omega$-weight, which we denote by  $\omega(O)$ and call the weight of the orbit. For each permutation $\sigma \in \mathfrak{S}_m$ we let $\mathrm{Fix}(\sigma) = \{s \in S \mid \sigma.s = s\}$ denote the set of fixpoints of $\sigma$. The corresponding inventory $\mathrm{Fix}_\lambda := \sum_{s \in \mathrm{Fix}(\sigma)} \omega(s)$ only depends on the cycle type $\lambda \vdash m$ of $\sigma$.

\begin{lemma}[Burnside's lemma for the symmetric group]\label{L:burn}
	\label{le:burn}
	The sum of the weights of all $\mathfrak{S}_m$-orbits is given by
	$
	\sum_{O \in S / \mathfrak{S}_m}	\omega(O) = \sum_{\lambda\vdash m}\frac{\mathrm{Fix}_{\lambda}}{z_{\lambda}}.
	$
\end{lemma}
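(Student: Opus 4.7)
The plan is to prove a weighted version of the standard Burnside--Cauchy--Frobenius orbit-counting lemma and then regroup the sum over permutations according to cycle type. The only subtlety is that the weights take values in the formal power series ring $A \subset \ndR[[z]]$ rather than in $\ndR$, so I need to make sure each rearrangement of sums is legitimate in the coefficient-wise sense. This is harmless because all coefficients are non-negative and the family $(\omega(s))_{s \in S}$ is assumed to be summable, so Fubini/Tonelli applies coefficient-wise to every double sum that appears.

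First I would verify the weighted Burnside identity
\begin{align*}
\sum_{O \in S/\mathfrak{S}_m} \omega(O) = \frac{1}{m!}\sum_{\sigma \in \mathfrak{S}_m}\sum_{s \in \mathrm{Fix}(\sigma)}\omega(s).
\end{align*}
The standard argument is to double-count the set of pairs $\{(\sigma,s) \in \mathfrak{S}_m \times S : \sigma.s = s\}$ with each pair weighted by $\omega(s)$. Summing first over $\sigma$ yields the right-hand side. Summing first over $s$, and using that weights are preserved by the action, the contribution of a single orbit $O$ equals $\omega(O) \sum_{s \in O}|\mathrm{Stab}(s)|$. By the orbit--stabiliser theorem $|\mathrm{Stab}(s)| = m!/|O|$ for every $s \in O$, so this contribution equals $m!\,\omega(O)$. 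Dividing by $m!$ gives the identity.

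Next I would partition the outer sum on the right according to cycle type. By hypothesis $\mathrm{Fix}_\lambda := \sum_{s \in \mathrm{Fix}(\sigma)}\omega(s)$ depends only on the cycle type $\lambda \vdash m$ of $\sigma$, and by the standard formula \eqref{E:zlam} the number of permutations of type $\lambda$ is $m!/z_\lambda$. Substituting gives
\begin{align*}
\sum_{O \in S/\mathfrak{S}_m}\omega(O) = \frac{1}{m!}\sum_{\lambda \vdash m}\frac{m!}{z_\lambda}\,\mathrm{Fix}_\lambda = \sum_{\lambda \vdash m}\frac{\mathrm{Fix}_\lambda}{z_\lambda},
\end{align*}
which is the claimed formula.

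The main obstacle, if there is one, is purely a formal bookkeeping issue: making sure that all the rearrangements above are justified when the weights lie in $A$ rather than in $\ndR$. Since all coefficients of all weights are non-negative and $\sum_{s \in S}\omega(s)$ is assumed to be well-defined in $A$, every sub-family is again summable coefficient-wise, so each interchange of summation is justified by the Tonelli theorem applied separately to each coefficient $[z^n]$.
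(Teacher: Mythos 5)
Your proof is correct. The paper states this lemma as a known result (the weighted Cauchy--Frobenius/Burnside lemma for $\mathfrak{S}_m$) and gives no proof of its own, so there is nothing to compare against; your argument --- double-counting the pairs $(\sigma,s)$ with $\sigma.s=s$ weighted by $\omega(s)$, applying orbit--stabiliser, and then grouping permutations by cycle type via the count $m!/z_\lambda$ --- is exactly the standard derivation the authors implicitly rely on, and your coefficient-wise Tonelli justification correctly handles the power-series-valued weights.
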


Suppose that for each type $\lambda \vdash m$ we fix some permutation $\sigma_\lambda \in \mathfrak{S}_m$ with type $\lambda$. Let $Z_m = \sum_{O \in S / \mathfrak{S}_m} \omega(O)$ denote the sum of the weights of all orbits. The following probabilistic application of Burnside's lemma will turn out useful.
\begin{lemma}
	\label{le:bprob}
	Suppose that all $\omega$-weights are positive real numbers. We may sample a random type $\overline{\lambda} \vdash m$ with probability
	$
	\Pr{\overline{\lambda} = \lambda} = \frac{\mathrm{Fix}_{\lambda}}{z_{\lambda}} Z_m^{-1}
	$
	and then select an element $\overline{s}$ from $\mathrm{Fix}(\sigma_{\overline{\lambda}})$ with probability proportional to its $\omega$-weight.
	Then the orbit $\overline{O}$ corresponding to $\overline{s}$ is distributed according to
	$
	\Pr{\overline{O} = O} = \omega(O) Z_m^{-1}.
	$
\end{lemma}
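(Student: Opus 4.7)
The plan is to compute $\Pr{\overline{O}=O}$ directly from the two-stage sampling definition, conditioning on the first-stage cycle type $\overline{\lambda}$. Because the $\mathfrak{S}_m$-action preserves weights, $\omega$ is constant on the orbit $O$ with common value $\omega(O)$, so the conditional probability that the second-stage pick $\overline{s}$ lands in $O$ is
\begin{align*}
\Pr{\overline{s}\in O\mid \overline{\lambda}=\lambda} = \frac{\sum_{s\in O\cap \mathrm{Fix}(\sigma_\lambda)}\omega(s)}{\mathrm{Fix}_\lambda} = \frac{|O\cap \mathrm{Fix}(\sigma_\lambda)|\,\omega(O)}{\mathrm{Fix}_\lambda}.
\end{align*}
Multiplying by $\Pr{\overline{\lambda}=\lambda}=\mathrm{Fix}_\lambda/(z_\lambda Z_m)$, summing over $\lambda$, and cancelling the $\mathrm{Fix}_\lambda$ factors, the claim reduces to the identity
\begin{align*}
\sum_{\lambda\vdash m}\frac{|O\cap \mathrm{Fix}(\sigma_\lambda)|}{z_\lambda}=1.
\end{align*}

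To obtain this identity I would simply apply Lemma~\ref{le:burn} a second time, now to the restricted action of $\mathfrak{S}_m$ on the single $\mathfrak{S}_m$-stable subset $O$ equipped with the trivial unit weight $\omega\equiv 1$. The restricted action has precisely one orbit (namely $O$ itself), so the left-hand side of Burnside equals $1$. On the right-hand side, $|O\cap \mathrm{Fix}(\sigma)|$ depends only on the cycle type of $\sigma$, and since there are $m!/z_\lambda$ permutations of type $\lambda$, grouping permutations by type converts the averaged fixpoint sum into exactly the displayed sum. Substituting back gives $\Pr{\overline{O}=O}=\omega(O)/Z_m$, as claimed.

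There is no real obstacle here; the proof is a two-line manipulation once one recognises that the inner sum is itself an instance of Burnside applied to a transitive action. The only point requiring a brief check is that the sampling procedure is well defined: positivity of the weights together with Lemma~\ref{le:burn} yields $\sum_{\lambda\vdash m}\mathrm{Fix}_\lambda/z_\lambda = Z_m>0$, so $\overline{\lambda}$ is a genuine probability distribution on partitions of $m$, and types $\lambda$ with $\mathrm{Fix}_\lambda=0$ are sampled with probability zero so the second stage never conditions on an empty fixpoint set.
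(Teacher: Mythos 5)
Your proposal is correct and follows essentially the same route as the paper: condition on the sampled cycle type, compute the conditional probability that $\overline{s}$ lands in $O$, and then invoke Burnside's lemma for the restricted (transitive) action of $\mathfrak{S}_m$ on the single orbit $O$ to collapse the sum over $\lambda$. The paper applies Burnside on $O$ with the weight $\omega$ itself, obtaining $\omega(O)=\sum_{\lambda\vdash m}z_\lambda^{-1}\sum_{s\in\mathrm{Fix}(\sigma_\lambda)\cap O}\omega(s)$, which is the same identity you derive with unit weights after factoring out the constant value $\omega(O)$; your added check that the first-stage distribution is well defined is a harmless extra.
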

\begin{proof}
	Let $O \in S / \mathfrak{S}_m$ be an arbitrary orbit. Clearly the symmetric group $\mathfrak{S}_m$ also operates on $O$, and applying Burnside's Lemma~\ref{le:burn} to this operation yields
	\[
		\omega(O) = \sum_{\lambda \vdash m} z_\lambda^{-1} \sum_{s \in \mathrm{Fix}(\sigma_\lambda) \cap O} \omega(s).
	\]
	Thus
	\begin{align*}
		\Pr{\overline{O} = O}  = \sum_{\lambda \vdash m} \Pr{\overline{\lambda}=\lambda} \Pr{\overline{s} \in \mathrm{Fix}(\sigma_\lambda) \cap O \mid \overline{\lambda}=\lambda} 
		= Z_m^{-1} \sum_{\lambda \vdash m} z_\lambda^{-1} \sum_{s \in \mathrm{Fix}(\sigma_\lambda) \cap O} \omega(s) 
		= Z_m^{-1}\omega(O).
	\end{align*}
\end{proof}

The operation of the group $\mathfrak{S}_m$ on the set $S$ induces an operation on the set $M(S)$ of all finite multi-sets of elements in $S$. The weight-function $\omega$ on $S$ extends in a natural to $M(S)$ by defining the weight of a multi-set to be the product of the weights of its elements (with repetitions). For any $\sigma \in \mathfrak{S}_m$ we let $\mathrm{Fix}^{M(S)}(\sigma)$ denote the set of all $M \in M(S)$ satisfying~$\sigma.M = M$.

\begin{lemma}[{\cite[Lem. 2]{GeGai}}]
\label{le:mult}
For each $\sigma \in \mathfrak{S}_m$ it holds that
\begin{align}
	\label{eq:ts}
	\sum_{M \in \mathrm{Fix}^{M(S)}(\sigma)} \omega(M) = \exp\left( \sum_{i\ge1} \frac{1}{i} \sum_{s \in \mathrm{Fix}^{S}(\sigma^i)} \omega(s)^i \right).
\end{align}
\end{lemma}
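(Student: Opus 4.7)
The plan is to decompose everything according to the $\sigma$-orbit structure on $S$. Let $\{O_\alpha\}_{\alpha \in I}$ denote the orbits of $\sigma$ acting on $S$, and write $\ell_\alpha = |O_\alpha|$ for the length of $O_\alpha$. Since the action preserves weights by assumption, all elements of a single orbit $O_\alpha$ share a common weight, which I will denote $\omega_\alpha \in A$.

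The first observation is that a multi-set $M \in M(S)$ lies in $\mathrm{Fix}^{M(S)}(\sigma)$ if and only if, for every orbit $O_\alpha$, every element of $O_\alpha$ occurs in $M$ with the same multiplicity $a_\alpha \ge 0$. Consequently the fixed multi-sets of $\sigma$ are parameterised by tuples $(a_\alpha)_{\alpha \in I}$ of non-negative integers, and the corresponding weight factorises as $\prod_\alpha \omega_\alpha^{a_\alpha \ell_\alpha}$. Summing over all such tuples gives
\begin{equation*}
\sum_{M \in \mathrm{Fix}^{M(S)}(\sigma)} \omega(M) \;=\; \prod_{\alpha \in I} \sum_{a \ge 0} \omega_\alpha^{a \ell_\alpha} \;=\; \prod_{\alpha \in I} \frac{1}{1 - \omega_\alpha^{\ell_\alpha}},
\end{equation*}
the manipulation being valid in $A$ because each $\omega_\alpha$ has zero constant term (or, more generally, one may verify that only finitely many orbits contribute to each fixed coefficient of~$z$ by the summability hypothesis, so the infinite product converges coefficientwise).

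Taking the formal logarithm and expanding $-\log(1-x) = \sum_{j \ge 1} x^j/j$ yields
\begin{equation*}
\log \sum_{M \in \mathrm{Fix}^{M(S)}(\sigma)} \omega(M) \;=\; \sum_{\alpha \in I} \sum_{j \ge 1} \frac{\omega_\alpha^{j \ell_\alpha}}{j}.
\end{equation*}
It remains to match this against the right-hand side of \eqref{eq:ts}. An element $s \in S$ is fixed by $\sigma^i$ precisely when the length of its $\sigma$-orbit divides $i$. Hence
\begin{equation*}
\sum_{s \in \mathrm{Fix}^S(\sigma^i)} \omega(s)^i \;=\; \sum_{\alpha : \ell_\alpha \mid i} \ell_\alpha\, \omega_\alpha^i.
\end{equation*}
Substituting this into the exponent on the right of \eqref{eq:ts} and swapping summations, each orbit $\alpha$ contributes over $i = j \ell_\alpha$ with $j \ge 1$, giving
\begin{equation*}
\sum_{i \ge 1} \frac{1}{i} \sum_{s \in \mathrm{Fix}^S(\sigma^i)} \omega(s)^i \;=\; \sum_{\alpha \in I} \sum_{j \ge 1} \frac{\ell_\alpha\, \omega_\alpha^{j \ell_\alpha}}{j \ell_\alpha} \;=\; \sum_{\alpha \in I} \sum_{j \ge 1} \frac{\omega_\alpha^{j \ell_\alpha}}{j},
\end{equation*}
which matches the logarithm computed above and proves the lemma.

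There is no real obstacle here; the only subtle point is the justification that the formal manipulations (infinite product, logarithm, interchange of summations) are legitimate in $A$, which follows from the coefficientwise summability assumption on $\sum_{s \in S} \omega(s)$ and the fact that each $\omega(s)^i$ has valuation growing in $i$ whenever $\omega(s)$ has no constant term; the degenerate case where constant terms occur can be handled by a standard truncation argument.
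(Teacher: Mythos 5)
Your proof is correct and follows essentially the same route as the paper's: decompose $S$ into $\langle\sigma\rangle$-orbits, observe that fixed multi-sets are unions of whole orbits so the left side factors as $\prod_\alpha (1-\omega_\alpha^{\ell_\alpha})^{-1}$, take the formal logarithm, and match the resulting double sum with the exponent on the right by noting that $s\in\mathrm{Fix}^S(\sigma^i)$ exactly when its orbit length divides $i$. Your handling of convergence via the coefficientwise summability hypothesis is also the same justification the paper gives for discarding infinite products of factors $\ne 1$.
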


In {\cite[Lem. 2]{GeGai}} such a result was stated, however instead of taking the power $\omega(s)^i$ on the right-hand side, a substitution operation $\omega(s)(z^i)$ was employed. This makes no difference for the cases in which this result is applied here or in \cite{GeGai}, because then $\omega(s)$ is always some power of $z$ and the two operations coincide, but just to be sure we verify Lemma~\ref{le:mult}:

\begin{proof}[Proof of Lemma~\ref{le:mult}]
	A multiset $M \in M(S)$ is fixed by $\sigma$, if and only if it is a multi-set union of orbits of the operation of the generated subgroup $<\sigma>$ on the set $S$. So let $(O_j)_{j \in J}$ denote the collection of these orbits. For each $j \in J$ we set $r_j = |O_j|$ and select a representative $s_j \in O_j$. Any $M \in M(S)$ may uniquely be written as the multi-set union of $\ell_j \ge 0$ copies of $O_j$ for all $j \in J$, with $\sum_{j \in J} \ell_j < \infty$, it follows that
	\begin{align}
		\label{eq:p}
		\sum_{M \in \mathrm{Fix}^{M(S)}(\sigma)} \omega(M) = \prod_{j \in J} \sum_{\ell \ge 0} \omega(s_j)^{\ell r_j} = \prod_{j \in J} \frac{1}{1 - \omega(s_j)^{r_j}}
	\end{align}
	Here we have used the assumption, that the family $(\omega(s))_{s \in S}$ is summable, which implies that all products with infinitely many factors $\ne 1$ in Equation~\eqref{eq:p} vanish. That is, we really only sum up weights of finite multi-sets. Applying the logarithm operator to Equation~\eqref{eq:p} yields 
	\begin{align}
		\label{eq:refme}
		\log \left( \sum_{M \in \mathrm{Fix}^{M(S)}(\sigma)} \omega(M) \right) = \sum_{j \in J} \sum_{\ell \ge 1} \frac{\omega(s_j)^{\ell r_j}}{\ell}.
	\end{align}
	
	We now focus on the argument of the exponential operator on the right-hand side of Equation~\eqref{eq:ts}.  Clearly we may write
	\begin{align}
		\sum_{i=1}^\infty \frac{1}{i} \sum_{s \in \mathrm{Fix}^{S}(\sigma^i)} \omega(s)^i = \sum_{j \in J} \sum_{i\ge1} \frac{1}{i} \sum_{s \in O_j \cap \mathrm{Fix}^{S}(\sigma^i)} \omega(s)^i.
	\end{align}
	It holds that $O_j \subset \mathrm{Fix}^{S}(\sigma^i)$ if $i$ is a  multiple of $r_j$, and $O_j \cap \mathrm{Fix}^{S}(\sigma^i) = \emptyset$ otherwise. Hence
	\begin{align}
	\sum_{j \in J} \sum_{i\ge1} \frac{1}{i} \sum_{s \in O_j \cap \mathrm{Fix}^{S}(\sigma^i)} \omega(s)^i = \sum_{j \in J} \sum_{\ell \ge 1} \frac{\omega(s)^{\ell r_j}}{\ell}.
	\end{align}
	Together with Equation~\eqref{eq:refme}, this verifies Equation~\eqref{eq:ts}.
\end{proof}

\subsection{Generating functions}
\label{sec:genfun}

We let $U(z)$ denote the generating series of unlabelled $k$-trees indexed by their number of hedra. Equivalently, we may state that $U(z)$ is the generating series of colour-orbits of unlabelled $k$-coding trees, indexed by their number of black vertices. The dissymmetry theorem proved by Gainer-Dewar and Gessel~\cite[Lem. 5, 6]{GeGai} expresses this function by the Equation
\begin{align}
\label{E:dissym}
U(z)=B(z)+C(z)-E(z).
\end{align}
Here $B(z)$, $C(z)$, and $E(z)$ denote the generating functions for colour-orbits of unlabelled $k$-coding trees that are rooted at a black vertex, a white vertex, and an edge, respectively. That is, the coefficient of $z^n$ in these series is formed by counting the number of orbits of the action of $\mathfrak{S}_{k+1}$ on the collection of $\mathfrak{S}_n$-orbits corresponding to the relabeling operation on the set of all pairs $(T,v)$ of a labelled and coloured $k$-coding tree $T$ having $n$ black vertices, and a root $v$ which is a marked black vertex, white vertex, or edge.

Our  goal in Section~\ref{sec:cyc} below is to provide a substraction-free alternative to Equation~\eqref{E:dissym}. We are going to build on the results of Gainer-Dewar and Gessel \cite[Thm. 7]{GeGai} concerning $k$-coding trees rooted at a black or white vertex. These classes may be decomposed by applying Lemma~\ref{le:mult} and Burnside's lemma (Lemma~\ref{le:burn}) to recolouring operations on marked, unlabelled, and coloured objects. We briefly recall the arguments, as we are going to use these decompositions (rather than just the resulting equations of generating functions) later on.
 
For any cycle type $\lambda  \vdash k+1$  we may fix a permutation $\pi_\lambda\in \mathfrak{S}_{k+1}$ having type $\lambda$  and let  $B_\lambda(z)$ denote the generating function for coloured, unlabelled, black-rooted trees that are invariant under re-colouring by $\pi_\lambda$. Furthermore, for any $i \ge 1$ we let $\lambda^i$ denote the cycle type of $\pi_\lambda^i$. This notion does not depend on the choice of permutation.   Burnside's lemma yields 
\begin{align}
\label{E:B1} B(z)&=\sum_{\lambda\vdash
	k+1}\frac{B_{\lambda}(z)}{z_{\lambda}}.
\end{align}

Each colour-orbit of a $C$-object contains a coloured, unlabelled coding-tree where the white root-vertex has colour $k+1$.  Thus the colour-orbits of the action of $\mathfrak{S}_{k+1}$ on all white-rooted, coloured, unlabelled coding trees correspond precisely to the colour-orbits of the action of $\mathfrak{S}_k$ on coloured, unlabelled coding trees marked at a white-vertex with colour~$k+1$. Applying Burnside's Lemma to this action of $\mathfrak{S}_k$ yields
\begin{align}
\label{E:C1} C(z)&= \sum_{\mu\vdash k} \frac{C_{\mu}(z)}{z_{\mu}}
\end{align}
with $C_\mu(z)$ denoting the generating series of all coloured, unlabelled $k$-coding trees that are rooted at a white vertex with colour $k+1$ and invariant under recolouring by a fixed (but arbitrary) permutation $\sigma_\mu \in \mathfrak{S}_k$ with type $\mu$.

We define the generating function $\bar{B}_\mu(z)$ in the same way as $C_\mu(z)$, but only count the trees where the white root with colour $k+1$ has precisely one black neighbor.  This black neighbor may be interpreted as a black root vertex, and $\bar{B}_\mu$-objects are termed black-rooted reduced trees. 

We may view a white-rooted, coloured, unlabelled $k$-coding tree whose root has colour $k+1$ as a  multi-set of such trees where additionally the white root has precisely one neighbour. Hence
Lemma~\ref{le:mult} applies, yielding
\begin{align}
\label{E:CexpB1}C_{\mu}(z)&=\exp
\left(\sum_{i=1}^{\infty}\frac{\bar{B}_{\mu^i}(z^i)}{i}\right).
\end{align}

If we delete the root of a $\bar{B}_\mu$-object, we are left with $k$ white-rooted unlabelled coloured $k$-coding trees $T_1, \ldots, T_k$ whose roots are coloured from $1$ to $k$. For any cycle $c = (c_1, \ldots, c_\ell)$ of $\sigma_\mu$ the trees $T_{c_1}, \ldots, T_{c_\ell}$ belong to the same colour-orbit, and each is invariant under relabeling by $\sigma_\mu^\ell$. Setting $d = \min(c_1, \ldots, c_\ell)$, the result of switching the colour $d$ with the colour $k+1$ in the tree $T_d$ yields a reduced tree, that together with the cycle $c$ already contains all information on the $T_1, \ldots, T_\ell$. Hence the trees corresponding to $c$ are enumerated by $C_{\mu^\ell}(z^\ell)$, and the generating series for $\bar{B}_\mu$-objects is given by
\begin{align}
\label{E:BbarC1}\bar{B}_{\mu}(z)&=z\prod_{i \in \mu} C_{\mu^i}(z^i),
\end{align}
with the index $i$ ranging over all parts of the type $\mu$. Similarly, we may argue that
\begin{align}
\label{E:BtoC}B_{\lambda}(z)&=z\prod_{i \in \lambda} C_{\lambda^i-(1,0,\ldots,0)}(z^i),
\end{align}
with $\lambda^i-(1,0,\ldots)$ denoting the cycle type obtained by removing one part of length $1$ from~$\lambda^i$.

\section{$k$-trees rooted at a front of distinguishable vertices}
\label{sec:disting}

 We let $\rho_k$ denote the radius of convergence of the generating series $U(z)$ of unlabelled $k$-trees.  Drmota and J.~\cite{DJ:14} established the following asymptotic enumerative result, showing the special role of the cycle type $1^k$ in this context.

\begin{lemma}[{\cite[Thm. 3]{DJ:14}}]
	\label{le:analytic}
	The series  $C_{1^k}(z)$, and $\bar{B}_{1^k}(z)$ have a dominant singularity of square-root type at~$\rho_k<1$ and it holds that $\bar{B}_{1^k}(\rho_k) = k^{-1}$.	The series $C_\mu(z)$ and $\bar{B}_\mu(z)$ are analytic at $\rho_k$ if $\mu \ne 1^k$ is not the cycle-type of the identity map. The series $U(z)$ has a dominant singularity of type $(1 - z/\rho_k)^{-3/2}$.
\end{lemma}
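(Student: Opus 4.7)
The plan is to split the argument by cycle-type: (i) identify $\bar{B}_{1^k}$ as the only series carrying the singularity at $\rho_k$, (ii) use a monotone comparison to push the singularities of $\bar{B}_\mu$ and $C_\mu$ past $\rho_k$ whenever $\mu \ne 1^k$, and (iii) substitute into the dissymmetry $U = B + C - E$ and collect the singular terms.

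For step (i), substituting \eqref{E:CexpB1} into \eqref{E:BbarC1} at $\mu = 1^k$ (using that $(1^k)^i = 1^k$) yields
\begin{equation}\label{eq:planB1k}
\bar{B}_{1^k}(z) = z \exp\!\Big(k\bar{B}_{1^k}(z) + k\sum_{i\ge 2}\frac{\bar{B}_{1^k}(z^i)}{i}\Big).
\end{equation}
A crude coefficient bound (for instance by domination via labelled $k$-tree counts) first produces a positive radius $\rho_k$, with $\rho_k < 1$ because $[z^n]\bar{B}_{1^k}(z)$ grows at least exponentially. Since every $i$ in the tail sum is $\ge 2$, the series $\sum_{i\ge 2}\bar{B}_{1^k}(z^i)/i$ is analytic on $|z| < \sqrt{\rho_k}$, strictly larger than $\rho_k$. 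Equation~\eqref{eq:planB1k} therefore fits the smooth implicit-function schema $y = zH(z,y)$ with $H(z,y) = \exp(ky + k\sum_{i\ge 2}\bar{B}_{1^k}(z^i)/i)$ jointly analytic about the critical point. The Meir--Moon / Flajolet--Sedgewick theorem then produces a square-root singularity at the unique $(\rho_k, y_0)$ satisfying $y_0 = \rho_k H(\rho_k, y_0)$ together with $1 = \rho_k H_y(\rho_k, y_0)$. Since $H_y = kH = ky/z$, the second condition collapses to $ky_0 = 1$, so $\bar{B}_{1^k}(\rho_k) = 1/k$. The identity $C_{1^k}(z)^k = \bar{B}_{1^k}(z)/z$, read off from \eqref{E:BbarC1}, transfers the square-root type to $C_{1^k}$.

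For step (ii), fix $\mu \vdash k$ with $\mu \ne 1^k$ and let $m = m(\mu) < k$ denote the number of fixed points of $\mu$. Separating the $i = 1$ factors in \eqref{E:BbarC1} and substituting \eqref{E:CexpB1} for the resulting $C_\mu(z)^m$ yields
\[
\bar{B}_\mu(z) = z\,A_\mu(z)\,e^{m\bar{B}_\mu(z)},\qquad A_\mu(z) := \exp\!\Big(m\sum_{j\ge 2}\tfrac{\bar{B}_{\mu^j}(z^j)}{j}\Big)\prod_{\substack{i\in\mu\\i\ge 2}}C_{\mu^i}(z^i).
\]
Coefficient-wise $\bar{B}_\mu \le \bar{B}_{1^k}$ and $C_\mu \le C_{1^k}$, because any $\sigma_\mu$-fixed object is also fixed by the identity. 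This makes the inner series convergent on $|z|\le\rho_k$, and since every factor of $A_\mu$ is evaluated at $z^i$ or $z^j$ with $i,j\ge 2$, $A_\mu$ is analytic on $|z|<\sqrt{\rho_k}$ and in particular at $\rho_k$. The same domination gives $\bar{B}_\mu(\rho_k) \le 1/k < 1/m$, so the derivative $1 - zA_\mu(z)m e^{m\bar{B}_\mu(z)} = 1 - m\bar{B}_\mu(z)$ does not vanish at $z = \rho_k$. The analytic implicit function theorem continues $\bar{B}_\mu$, and then via \eqref{E:CexpB1} also $C_\mu$, analytically past $\rho_k$.

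Finally, for step (iii), steps (i) and (ii) localize the singular contributions at $\rho_k$ in $U = B + C - E$ (with $E = \sum_\mu \bar{B}_\mu C_\mu/z_\mu$) to the three summands indexed by $\lambda = 1^{k+1}$ in \eqref{E:B1} and $\mu = 1^k$ in \eqref{E:C1} and in $E$. Plugging the local expansion $C_{1^k}(z) = c_0 - c_1\sqrt{1-z/\rho_k} + O(1-z/\rho_k)$ into $B_{1^{k+1}}(z) = zC_{1^k}(z)^{k+1}$, $\bar{B}_{1^k}(z) = zC_{1^k}(z)^k$, and the corresponding Burnside prefactors shows that the $\sqrt{1-z/\rho_k}$ contributions of $B$, $C$ and $E$ cancel exactly, so that the first surviving singular term of $U$ is of order $(1-z/\rho_k)^{3/2}$, as required. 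The main obstacle I expect is precisely this cancellation: the verification reduces to an algebraic identity in the leading $\sqrt{1-z/\rho_k}$ coefficients, which must be collapsed using the characteristic relation $k\rho_k C_{1^k}(\rho_k)^k = 1$ coming from step (i), and the bookkeeping through the $1/z_\lambda$, $1/z_\mu$ prefactors is the delicate piece.
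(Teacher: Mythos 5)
The paper does not actually prove this lemma: it is imported verbatim from the cited reference \cite[Thm.~3]{DJ:14}, so there is no internal proof to compare against. Your reconstruction is correct and follows the same route as that reference: the smooth implicit-function (square-root) schema for \eqref{eq:recur}, with the characteristic condition collapsing to $k\bar{B}_{1^k}(\rho_k)=1$; coefficientwise domination $\bar{B}_\mu\le\bar{B}_{1^k}$, $C_\mu\le C_{1^k}$ plus the analytic implicit function theorem (using $m\bar{B}_\mu(\rho_k)\le (k-1)/k<1$) for $\mu\ne 1^k$; and the cancellation of the $\sqrt{1-z/\rho_k}$ terms in \eqref{E:dissym}, which I checked does reduce exactly to the relation $k\rho_k C_{1^k}(\rho_k)^k=1$. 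The one loose end is that ``dominant singularity of type $(1-z/\rho_k)^{3/2}$'' also requires the surviving $3/2$-coefficient to be nonzero; this does not follow from the cancellation alone, but can be settled, e.g., by the lower bound $n[z^n]U(z)=|\cV_n|\ge [z^n]B_{1^{k+1}}(z)/(k+1)!\asymp n^{-3/2}\rho_k^{-n}$, which rules out a milder singularity (and note that the exponent in the lemma as printed should be $+3/2$, consistent with \eqref{eq:recover}).
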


The class  of labelled $k$-trees admits a recursive decomposition~\cite{DJS:16} that is based on $k$-trees rooted at a front of distinguishable vertices.  Two such elements are considered isomorphic, if there is a graph isomorphism that \emph{pointwisely} preserves the root-front. Hence the corresponding cycle-index sums do not count front-rooted unlabelled $k$-trees, but unlabelled $k$-trees that are rooted at a front of \emph{distinguishable} vertices.

This relates to the present setting as follows.
The $k$-trees counted by $\bar{B}_{1^k}(z)$ are unlabelled and coloured, with a root-front of colour $k+1$ that is contained in a unique hedron. The colours $1$ to $k$ of the remaining fronts of this hedron uniquely determine the front-colouring of the entire $k$-tree, and may be interpreted as a labelling of the $k$ vertices of the root-front. That is,  $\bar{B}_{1^k}(z)$ counts unlabelled uncoloured $k$-trees that are rooted at a front of $k$ distinguishable vertices that is contained in a unique hedron. The series $C_{1^k}(z)$ counts such objects without the restraint of the root-front having to belong to a unique hedron. By \eqref{E:CexpB1} and \eqref{E:BbarC1} these series satisfy the equations
\begin{align}
\label{eq:recur}
\bar{B}_{1^k}(z) = z\exp
\left(k\sum_{i=1}^{\infty}\frac{\bar{B}_{1^k}(z^i)}{i}\right) \quad \text{and} \quad C_{1^k}(z) = \exp\left(\sum_{i=1}^{\infty}\frac{\bar{B}_{1^k}(z^i)}{i}\right),
\end{align}
which of course agree with the cycle-index sums associated to the decomposition of labelled $k$-trees in~\cite{DJS:16}.

In~\cite{SENR} $k$-trees rooted at a front of distinguishable vertices were studied as special cases of unlabelled $\cR$-enriched trees. Let $c_k$ be defined as in \eqref{eq:cc}.
\begin{lemma}[{\cite[Sec. 6.5]{SENR}}]
	\label{le:base}
	Let $\sG_n$ be either the uniform $n$-hedra $k$-tree from the class $\bar{B}_{1^k}$ or $C_{1^k}$. Let $\mu_n^{\sG}$ denote the uniform measure on the vertices of $\sG_n$.  The rescaled space $(\sG_n, c_k n^{-1/2} d_{\sG_n}, \mu_n^{\sG})$ converges in the Gromov--Hausdorff--Prokhorov sense towards the Brownian tree. There are constants $C,c>0$ such that $\Pr{\Di(\sG_n) \ge x} \le C \exp(-cx^2/n)$ for all $n$ and $x \ge 0$. Let $v_n$ be a vertex sampled according to $\mu_n$. There is an infinite rooted random graph $\hat{\sG}$ such that for any sequence $r_n = o(\sqrt{n})$ the $r_n$-neighbourhood $U_{r_n}(\cdot)$ satisfies $d_{\textsf{TV}}( U_{r_n}(\sG_n, v_n), U_{r_n}(\hat{\sG})) \to 0$.
\end{lemma}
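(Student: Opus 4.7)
The plan is to recognize the classes $\bar{B}_{1^k}$ and $C_{1^k}$ as unlabelled $\cR$-enriched trees with enriching species $\cR = \mathrm{Set}^k$, and then invoke the general scaling limit, tail, and Benjamini--Schramm theorems for this setting proved in \cite{SENR}. Indeed, the equations in~\eqref{eq:recur} may be rewritten as
\[
	\bar{B}_{1^k}(z) = z \cdot Z_{\mathrm{Set}^k}\bigl(\bar{B}_{1^k}(z), \bar{B}_{1^k}(z^2), \ldots\bigr),
\]
where $Z_{\mathrm{Set}^k}(p_1,p_2,\ldots) = \exp\bigl(k\sum_{i\ge 1} p_i/i\bigr)$ is the cycle-index sum of ordered $k$-tuples of finite sets. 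Combinatorially, a $\bar{B}_{1^k}$-object is a rooted tree whose root carries $k$ distinguishable slots, each holding an unordered multi-set of $\bar{B}_{1^k}$-objects, while a $C_{1^k}$-object is a $\mathrm{Set}$-structure of $\bar{B}_{1^k}$-objects.

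The hypothesis to verify for the \cite{SENR} framework is the criticality of the associated offspring distribution at the dominant singularity. This is exactly what Lemma~\ref{le:analytic} supplies: $\bar{B}_{1^k}(z)$ has a square-root singularity at $\rho_k$ with $\bar{B}_{1^k}(\rho_k) = 1/k$, which is equivalent to $\rho_k\,\psi'(\bar{B}_{1^k}(\rho_k)) = 1$ for $\psi(y) = \exp(ky)$; moreover the ``shifted'' series $\bar{B}_{1^k}(z^i)$ for $i \ge 2$ are analytic in a disk of radius strictly larger than $\rho_k$, so the multi-set symmetries contribute only a bounded correction.

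With these inputs, the three conclusions of the lemma follow as three standard outputs of the theory developed in \cite{SENR}. The Gromov--Hausdorff--Prokhorov convergence to the Brownian tree comes from the general scaling limit for unlabelled $\cR$-enriched trees, the scaling constant being the product of the offspring-variance-type factor and the correction $\sqrt{1 + k \sum_{i \ge 2} \bar{B}_{1^k}'(\rho_k^i)\rho_k^i}$ arising from fixed-points of cyclic relabellings inside symmetric multi-sets; this matches~\eqref{eq:cc}. The sub-Gaussian diameter tail bound and the Benjamini--Schramm convergence with the strong rate $r_n = o(\sqrt{n})$ likewise follow from the corresponding general theorems, yielding the sin-tree $\hat{\sG}$ as the natural limit object. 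To transfer the statements from $\bar{B}_{1^k}$ to $C_{1^k}$, one notes that subcriticality forces a uniform $C_{1^k}$-object of size $n$ to decompose as a multi-set of $\bar{B}_{1^k}$-objects containing a single giant component of size $n - O_p(1)$, so all geometric features are dominated by that component and the convergence results transfer without change.

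The principal obstacle is matching the scaling constant to the explicit formula~\eqref{eq:cc}: one must carefully propagate the square-root singularity data of Lemma~\ref{le:analytic} through the Boltzmann-type decomposition to identify the effective offspring variance, and track the multiplicative contribution of the iterated series $\bar{B}_{1^k}(z^i)$, $i \ge 2$, that encode the unlabelled symmetries. Once this analytic bookkeeping is in place, the remaining work is a direct invocation of the general theorems.
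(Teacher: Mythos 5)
Your proposal takes essentially the same route as the paper: Lemma~\ref{le:base} is not proved there but imported from \cite[Sec.~6.5]{SENR} exactly by viewing $\bar{B}_{1^k}$ as unlabelled $\cR$-enriched trees with $\cR=\mathrm{Set}^k$ via \eqref{eq:recur} and reading off the scaling limit, tail bound, and local limit from the general theorems of that framework (the paper only adds the remarks that the published Gromov--Hausdorff convergence extends to Gromov--Hausdorff--Prokhorov and that the constant from \cite{SENR} coincides with \eqref{eq:cc}). One small imprecision: $\bar{B}_{1^k}(\rho_k)=1/k$ is not equivalent to $\rho_k\,\psi'(\bar{B}_{1^k}(\rho_k))=1$ with $\psi(y)=e^{ky}$ as literally written --- that would force $\rho_k=1/(ek)$, contradicting the expansion of $\rho_k$ --- the correct criticality condition is $\rho_k\,\psi'(\bar{B}_{1^k}(\rho_k))\exp\bigl(k\sum_{i\ge2}\bar{B}_{1^k}(\rho_k^i)/i\bigr)=1$, which reduces to $k\bar{B}_{1^k}(\rho_k)=1$ and is precisely what Lemma~\ref{le:analytic} supplies.
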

Here the limit graph $\hat{\sG}$ does not depend on whether we consider random elements of the class $\bar{B}_{1^k}$ or of the class $C_{1^k}$. To be precise, \cite{SENR} established Gromov--Hausdorff convergence of $\sG_n$ to the Brownian tree,  but it is not hard to see that the arguments may be extended to obtain Gromov--Hausdorff--Prokhorov convergence. The scaling constant $c_k$ is explicit in~\cite[First display after Eq. (7.29)]{SENR} and may be seen to be identical to the expression in \eqref{eq:cc} by straight-forward calculations.

The numerical approximation of the constant $\rho_k$ in Table~\ref{tb:constants} was formed by taking $m:=30$, calculating a truncation $\bar{B}^{[m]}_{1^k}(z)$ up to order $m$ of $\bar{B}_{1^k}(z)$ by using the recursive relation~\eqref{eq:recur}, and numerically solving the truncated system $x \exp(k \sum_{i=2}^m i^{-1}\bar{B}^{[m]}_{1^k}(x^i)) = 1/(ek)$.

\section{A substraction-free decomposition}
\label{sec:cyc}


\subsection{Cycle pointing}

Let $m \ge 0$ be an integer. Recall that any permutation $\sigma \in \mathfrak{S}_m$ may be decomposed in a unique way into a product of disjoint cycles. The cycles  correspond to the orbits of the action of the generated subgroup $<\sigma> \subset \mathfrak{S}_m$ on the set $[m]$. Here we count fixed points as $1$-cycles. In the following, we say $c$ is a cycle of $\sigma$, if $c$ is one of the factors in this decomposition.

Suppose that the symmetric group  $\mathfrak{S}_m$ acts on a set $S$. We may consider the \emph{cycle pointed} set $S^\circ$ of all pairs $(s, c)$ of an element $s \in S$ together with a \emph{marked cycle} $c$, for which \emph{at least} one permutation $\sigma \in \mathfrak{S}_m$ exists that satisfies $\sigma.s = s$ and that has $c$ as one of its cycles. Naturally, the group $\mathfrak{S}_m$ acts on $S^\circ$ via $\nu.(s,c) = (\nu.s, \nu c \nu^{-1})$ for all $\nu \in \mathfrak{S}_m$ and $(s,c) \in S^\circ$. 

There is a well-defined map $S^\circ / \mathfrak{S}_m \to S / \mathfrak{S}_m$, that sends the orbit of an element $(s,c) \in S^\circ$ to the orbit of  $s$. By \cite[Thm. 15]{BFKV:07}, the pre-image of any orbit in $S / \mathfrak{S}_m$ has precisely $m$ elements. This completely reduces the task of counting $S / \mathfrak{S}_m$ to the task of counting~$S^\circ / \mathfrak{S}_m$. The latter may be easier, as the marked cycle provides a point of reference.

Recall that the groups $\mathfrak{S}_n$ and $\mathfrak{S}_{k+1}$  operate on the class $\cK_{n,k}$ of front-coloured and hedron-labelled $k$-trees, and that the two operations commute. We would like to study unlabelled, uncoloured $k$-trees, that correspond bijectively to the elements of the collection $(\cK_{n,k} / \mathfrak{S}_n) / \mathfrak{S}_{k+1}$ of colour-orbits of unlabelled, coloured $k$-trees.

We have to take great care when trying to apply the cycle-pointing method to this setting, as there are luring pitfalls: For example, we could cycle point the operation on $\mathfrak{S}_n$ on $\cK_{n,k}$, resulting in a set $\cK_{n,k}^{\circ_n}$. The orbits $\cK_{n,k}^{\circ_n} / \mathfrak{S}_n$ are in a $n$ to $1$ correspondence to the unlabelled coloured $k$-trees from $\cK_{n,k} / \mathfrak{S}_n$, however this relation breaks when passing to the colour-orbits. That is, the orbits $(\cK_{n,k}^{\circ_n} / \mathfrak{S}_n) / \mathfrak{S}_{k+1}$ satisfy, in general, no longer an~$n$ to~$1$ correspondence to the unlabelled, uncoloured trees $(\cK_{n,k} / \mathfrak{S}_n) / \mathfrak{S}_{k+1}$. A counter-example where this relation fails is already given for the special case $n=5$ and $k = 1$.

Let $\cU_n$ denote the set of all unlabelled, uncoloured $k$-trees with $n$ hedra. What we are going to do is consider the action of the symmetric group $\mathfrak{S}_n$ on the set $\cK_n$ of $n$-hedron $k$-trees with hedra labelled from $1$ to $n$. Clearly there is a bijection from $\cK_n /  \mathfrak{S}_n$ to $\cU_n$, and consequently an $n$ to $1$ correspondence from $\cV_n := \cK_n^\circ / \mathfrak{S}_n$ to $\cU_n$. Compare with Figure~\ref{fi:cyc}.

\begin{figure}[t]
	\label{fi:cyc}
	\begin{center}
		\includegraphics[scale=0.6]{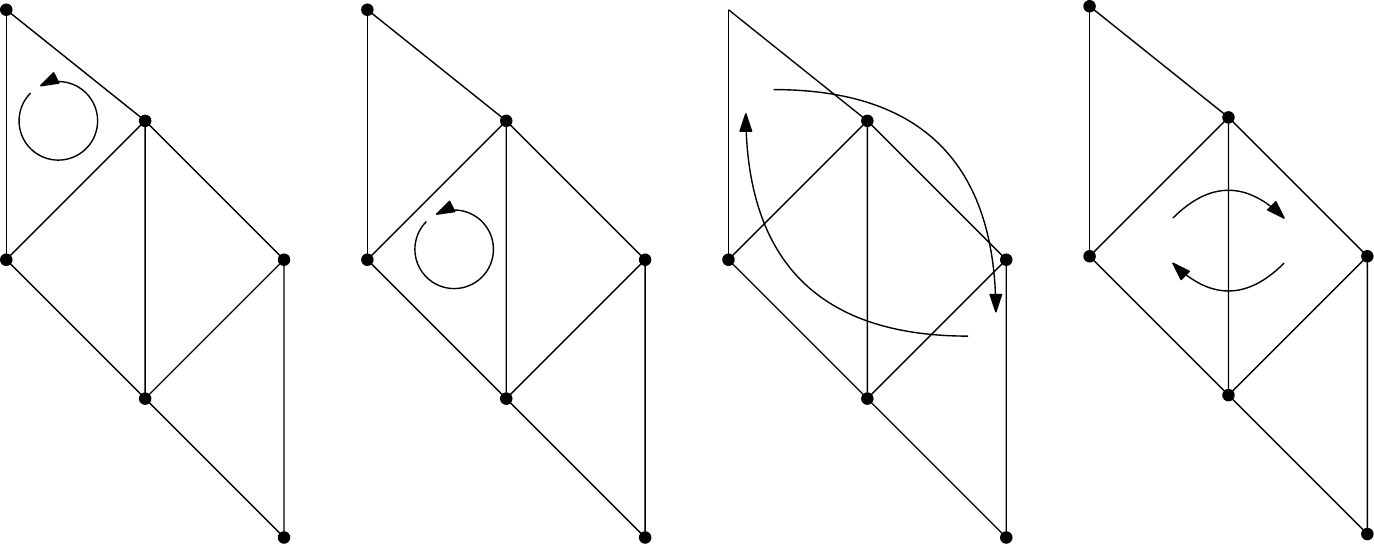}
		\caption{The four possible cycle-pointing of a $2$-tree with four hedra.}
	\end{center}
\end{figure}

Given a hedron-labelled (uncoloured) $k$-tree $K \in \cK_n$ we may consider the labelled tree $T$ whose vertices correspond to the hedra of $K$, with two vertices being incident if the corresponding hedra share a front.  Any permutation $\sigma \in \mathfrak{S}_n$ with $\sigma.K$ induces a graph-isomorphism on $K$ and hence a graph isomorphism on $T$. If we mark a cycle $c$ of $K$, then~$c$ is also a marked cycle of $T$. Thus, an unlabelled cycle-pointed $k$-tree from $\cV_n$ induces an unlabelled cycle-pointed tree.

There are three types of cycle-pointed trees, as was established in \cite[Prop. 24]{BFKV:07}. The first type are trees with a marked fix-point. If the marked cycle has length at least $\ell \ge 2$, then one may consider the corresponding $\ell$ \emph{connecting paths}, that join consecutive atoms of $c$. Each of these paths has a center, which may either be a vertex or an edge, and the centers of all connecting paths coincide \cite[Claim 22]{BFKV:07}. This allows us to distinguish between the second type, where this $\emph{cycle center}$ is a vertex, and the third type, where it is an edge. According to the three possible types of the cycle pointed (unlabelled, uncoloured) tree associated to a cycle pointed (unlabelled, uncoloured) $k$-tree, we may split $\cV_n$ into a disjoint union
\begin{align}
	\label{eq:dec}
	\cV_n = \cV_n^{(1)} \sqcup \cV_n^{(2)} \sqcup \cV_n^{(3)}.
\end{align}

That is, the first part corresponds to unlabelled $k$-trees with a marked hedron. The second part corresponds to unlabelled $k$-trees with a marked cycle of length at least $2$, such that the cycle-center corresponds a hedron, and the third part to the case where the cycle-center corresponds to a front. We let $V^{(1)}(z)$, $V^{(2)}(z)$, and $V^{(3)}(z)$ denote the corresponding generating series, that is, 
$
V^{(i)}(z) = \sum_{\ell \ge 1} |\cV_\ell^{(i)}| z^\ell.
$
Furthermore, we set $V(z) = \sum_{\ell \ge 1} |\cV_\ell|z^\ell$.

\section{Analysis of the summands}
The generating series and bijective arguments of Section~\eqref{sec:genfun} may be interpreted in terms of $k$-coding trees (rooted for example at black or white vertices) and in terms of $k$-trees (rooted  at a hedron or a front). In order to avoid confusion, we are going to interpret everything in terms of $k$-trees throughout this section. In particular, we regard $B(z)$ as the generating series of unlabelled, uncoloured $k$-trees rooted at a hedron, and $C(z)$ as the generating series of unlabelled, uncoloured $k$-trees that are rooted at a front. A front-colouring of a $k$-tree will always be subject to the restraints stated in Section~\ref{sec:bij}, that the fronts of any hedron are coloured from $1$ to $k+1$ and that fronts that are mirror to each other receive the same colour.

In order to sample an unlabelled $k$-tree with $n$ hedra uniformly at random we may uniformly select a cycle-pointed $k$-tree from $\cV_n$ and then forget about the marked cycle. The decomposition in~\eqref{eq:dec} allows us to divide the study of $\cV_n$ into three cases, depending on the cycle-center. In the following, we treat each part individually.

\subsection{Hedron-rooted $k$-trees}

As unlabelled $k$-trees with a marked hedron correspond bijectively to colour-orbits of unlabelled $k$-coding trees that are rooted at a white vertex, it follows that
\begin{align}
	\label{eq:pp1}
	V^{(1)}(z) = B(z).
\end{align}
That is, $V^{(1)}$-objects correspond bijectively to unlabelled uncoloured hedron-rooted $k$-trees. We are going to make the following observation.
\begin{lemma}
	\label{le:v1}
	Theorems~\ref{te:A}, ~\ref{te:B} and \ref{te:C} hold for the uniformly at random selected unlabelled uncoloured hedron-rooted $k$-tree with $n$ hedra.
\end{lemma}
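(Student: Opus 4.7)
The plan is to sample a uniform element of $\cV_n^{(1)}$ via the probabilistic Burnside lemma (Lemma~\ref{le:bprob}), applied to the $\mathfrak{S}_{k+1}$-action by front-recolouring on coloured unlabelled hedron-rooted $k$-trees. By \eqref{E:B1}, this amounts to first picking a random cycle type $\overline{\lambda} \vdash k+1$ with probability proportional to $[z^n]B_{\overline{\lambda}}(z)/z_{\overline{\lambda}}$, and then drawing a uniform coloured object fixed by a representative $\sigma_{\overline{\lambda}}$. The first step is to identify the dominant cycle type: inspecting \eqref{E:BtoC} in combination with Lemma~\ref{le:analytic}, every factor in the product defining $B_\lambda(z)$ for $\lambda \ne 1^{k+1}$ is either of the form $C_\mu(z^i)$ with $i \ge 2$ (analytic at $\rho_k$ because $\rho_k < 1$) or $C_{\lambda - (1,0,\ldots)}(z)$ with $\lambda - (1,0,\ldots) \ne 1^k$ (also analytic at $\rho_k$ by Lemma~\ref{le:analytic}). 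Hence $B_\lambda(z)$ is analytic on a disk of radius strictly greater than $\rho_k$ for $\lambda \ne 1^{k+1}$, while $B_{1^{k+1}}(z) = z\,C_{1^k}(z)^{k+1}$ inherits a square-root singularity at $\rho_k$ from $C_{1^k}$, giving $[z^n]B_{1^{k+1}}(z) \sim c\,\rho_k^{-n} n^{-3/2}$ by standard transfer theorems. Therefore $\Pr{\overline{\lambda} = 1^{k+1}} \to 1$ exponentially fast, and it suffices to prove the three conclusions conditional on this event.

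Conditional on $\overline{\lambda} = 1^{k+1}$, the random object decomposes as a root hedron together with $k+1$ independent rooted subtrees from the class $C_{1^k}$, with hedra counts $(N_1, \ldots, N_{k+1})$ summing to $n-1$ and joint law proportional to $\prod_j [z^{N_j}] C_{1^k}(z)$. Since $[z^n] C_{1^k}(z) \sim c'\rho_k^{-n} n^{-3/2}$ by Lemma~\ref{le:analytic}, a standard one-big-jump argument from the theory of subexponential densities yields a random index $J$ with $N_J = n - O_p(1)$ and $\max_{j \ne J} N_j = O_p(1)$. Conditional on the sizes, each subtree is a uniform element of $C_{1^k}$ of the prescribed size, so Lemma~\ref{le:base} applies to the giant subtree, which I denote $\sG_{N_J}$.

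The three conclusions then follow by transferring the properties of $\sG_{N_J}$ to the full hedron-rooted tree. The complement of $\sG_{N_J}$ (the root hedron together with the small subtrees) has $O_p(1)$ vertices and diameter $O_p(1)$, so the rescaled Gromov--Hausdorff--Prokhorov distance between the full tree and $(\sG_{N_J}, c_k N_J^{-1/2}\,d_{\sG_{N_J}}, \mu^\sG_{N_J})$ vanishes, and Lemma~\ref{le:base} combined with $N_J/n \to 1$ in probability yields Theorem~\ref{te:A}. For Theorem~\ref{te:B}, a union bound across the $k+1$ subtrees together with $\Di \le \max_j \Di(\sG_{N_j}) + O(1)$ and the sub-Gaussian bound of Lemma~\ref{le:base} (applied conditionally on $(N_j)$) delivers the desired tail $C\exp(-c x^2/n)$. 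For Theorem~\ref{te:C}, a uniform vertex lies in $\sG_{N_J}$ with probability $1 - O(1/n)$, and on that event its $r_n$-ball coincides in total variation with its $r_n$-ball in $\sG_{N_J}$ up to a vanishing error, since $r_n = o(\sqrt n)$ while the typical distance from a uniform vertex to the root front of $\sG_{N_J}$ is of order $\sqrt n$ by Theorem~\ref{te:A} applied to $\sG_{N_J}$; Lemma~\ref{le:base} then identifies the limit $\hat{\sG}$.

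The main obstacle is establishing the condensation of $(N_j)_{j \ne J}$ with sufficient quantitative strength, namely uniform tightness with subexponential tail controls on the small subtree sizes, so as to rule out any contribution of those subtrees to the GHP scaling limit, the diameter tail, and the $r_n$-neighbourhood total-variation convergence. This rests on the polynomial $n^{-3/2}$ coefficient decay of $C_{1^k}$ inherited from Lemma~\ref{le:analytic}, which places the decomposition in the classical subexponential condensation regime.
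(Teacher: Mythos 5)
Your proposal is correct and follows essentially the same route as the paper: reduce via Lemma~\ref{le:bprob} and Lemma~\ref{le:analytic} to the exponentially dominant cycle type $1^{k+1}$, use $B_{1^{k+1}}(z)=zC_{1^k}(z)^{k+1}$ and a condensation argument to isolate a giant $C_{1^k}$-component of size $n-O_p(1)$, and transfer all three conclusions from Lemma~\ref{le:base}, with the diameter tail obtained by a union bound over the $k+1$ branches conditionally on their sizes. The only slip is the inequality $\Di \le \max_j \Di(\sG_{N_j}) + O(1)$, which should read $\Di \le 2\max_j \Di(\sG_{N_j}) + O(1)$ since a diametral path may traverse two branches (the paper uses the equivalent statement that diameter $\ge x$ forces some branch to have diameter $\ge x/2$); this only affects the constant $c$ in the sub-Gaussian bound.
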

\begin{proof}
By Lemma~\ref{le:bprob} it follows that in order to uniformly sample an unlabelled hedron-marked $k$-tree, we may first sample a cycle type $\lambda \vdash k+1$ with probability $([z^n]B(z))^{-1} [z^n] B_\lambda(z \rho_k) / z_\lambda$ and then uniformly select a front-coloured $k$-tree with $n$ hedra that is fixed by the permutation~$\pi_\lambda$. By Lemma~\ref{le:analytic} and Equation~\eqref{E:BtoC} the cycle type is exponentially likely to be equal to $1^{k+1}$. The special case  $B_{1^{k+1}}(z) = z C_{1^k}(z)^{k+1}$ of Equation~\eqref{E:BtoC} corresponds to the fact that any $B_{1^{k+1}}$-object may be constructed in a canonical way by gluing the root-fronts of $k+1$ $C_{1^k}$-objects together to form a root-hedron. (See Section~\ref{sec:genfun} for details.) By Lemma~\ref{le:analytic} it holds that $[z^k]C_{1^k}(z) \sim \rho^{-k} k^{-3/2} c_{1^k}$ for some constant $c_{1^k}>0$. Hence either by direct calculations or by applying more general principles of random partitions~\cite{Gi, Gibb} it follows that the largest $C_{1^k}$-component in a random $B_{1^{k+1}}$-object of size $n$ has size $n + O_p(1)$. By Lemma~\ref{le:base} it follows that the limits of Theorem~\ref{te:A} and Theorem~\ref{te:C} hold for random hedron-rooted unlabelled $k$-trees, with the scaling constant of the scaling limit being equal to those for the case of $k$-trees rooted at a front of distinguishable vertices. We may also deduce a tail-bound for the diameter. By Lemma~\ref{le:base} there are constants $C,c>0$ such that probability for the $k$-tree diameter of a $C_{1^k}$-object of size $n$ to be larger than $x\ge0$ is bounded uniformly by $C\exp(-cx^2/n)$. An $n$-sized $B_{1^{k+1}}$-object consists $k+1$ components whose sizes $n_1, \ldots, n_{k+1}$ sum up to $n-1$, since none of them contains the root hedron.  Let $\sB_n$ be a uniformly selected unlabelled, hedron-rooted and front-coloured $k$-tree with $n$ vertices and let $C^1(\sB_n), \ldots, C^{k+1}(\sB_n)$ denote its components. If $\sB_n$ has diameter at least $x$ then at least one of its components has diameter at least $x/2$. Letting $|C^i(\sB_n)|$ denote the number of hedra in the component $C^i(\sB_n)$, it follows that
\begin{align*}
	\Pr{\Di(\sB_n) \ge x} &\le \sum_{n_1 + \ldots + n_{k+1} = n-1} \Pr{|C^{i}(\sB_n)|=n_i,\,i=1 \ldots k+1} \sum_{i=1}^{k+1} C \exp(-cx^2/(4n_i)) \\
	&\le C(k+1) \exp(-c x^2 /(4n)).
\end{align*}
We argued above that the total variational distance of uniform random coloured 
 and uniform random uncoloured 
  hedron rooted $k$-trees (that is, $B_{1^{k+1}}$ and $B$ objects) with $n$ vertices is exponentially small  (as the partition type we considered is exponentially likely to be equal to $1^{k+1}$). It follows that the diameter tail-bound of Theorem~\ref{te:B} holds for the random unlabelled uncoloured hedron-rooted $k$-tree with $n$ vertices. This concludes the proof.
 \end{proof}

\subsection{Cycle-pointed $k$-trees with a hedron cycle-center}
In this section we show that there are only few cycle-pointed $k$-trees with a hedron cycle-center.

\begin{lemma}
\label{le:v2}
A uniformly selected cycle-pointed $k$-tree from the class $\cV_n$ is exponentially unlikely to have a hedron as cycle-center.
\end{lemma}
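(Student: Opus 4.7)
The plan is to show that $|\cV_n^{(2)}|$ has exponential growth rate strictly below $\rho_k^{-1}$, whereas $|\cV_n| = n|\cU_n|$ grows at rate $\rho_k^{-n}$ up to polynomial factors by Lemma~\ref{le:analytic}. The intermediate step is a Burnside-style reduction of $V^{(2)}(z)$ to a linear combination of the cycle-index series $B_\lambda(z)$ with $\lambda \ne 1^{k+1}$, each of which extends analytically beyond~$\rho_k$.

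First I would translate $V^{(2)}$ into the colouring framework analogously to~\eqref{E:B1}. A $V^{(2)}$-object is a colour-orbit of a coloured, hedron-rooted $k$-tree $T$ together with a marked cycle of length $\ge 2$ of the front-permutation induced on the root hedron by an automorphism of $T$. Applying Burnside's Lemma~\ref{le:burn} to the $\mathfrak{S}_{k+1}$-action and noting that the identity has no cycle of length $\ge 2$, I obtain
\[
V^{(2)}(z) = \sum_{\lambda \vdash k+1,\ \lambda \ne 1^{k+1}} \frac{m_\lambda}{z_\lambda}\, B_\lambda(z),
\]
where $m_\lambda = \sum_{i \ge 2} \lambda_i$ counts the cycles of $\pi_\lambda$ of length $\ge 2$.

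Second I would verify that $B_\lambda(z)$ extends analytically beyond~$\rho_k$ for every $\lambda \ne 1^{k+1}$, by examining each factor in Equation~\eqref{E:BtoC}. For parts $i \ge 2$ of $\lambda$, the factor $C_{\lambda^i - (1,0,\ldots,0)}(z^i)$ is evaluated at $z^i$, and since any $C_\mu$ has radius of convergence at least $\rho_k$ by Lemma~\ref{le:analytic}, and $\rho_k^{1/i} > \rho_k$ using $\rho_k < 1$, this factor is analytic in a neighbourhood of $\rho_k$. For parts $i = 1$ of $\lambda$ the relevant factor is $C_{\lambda - (1,0,\ldots,0)}(z)$; since $\lambda \ne 1^{k+1}$ forces $\lambda$ to contain a part of length $\ge 2$, the index $\lambda - (1,0,\ldots,0)$ differs from $1^k$, and Lemma~\ref{le:analytic} again yields analyticity at $\rho_k$. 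Setting $\rho_k' := \min_{\lambda \ne 1^{k+1}} \rho(B_\lambda) > \rho_k$, it follows that $[z^n] V^{(2)}(z) = O((\rho_k')^{-n})$.

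Finally, comparing with $|\cV_n| = n|\cU_n| \ge c \rho_k^{-n}$ (from the fact that $\rho_k$ is the radius of convergence of $U(z)$, with a singularity of type $(1 - z/\rho_k)^{-3/2}$ by Lemma~\ref{le:analytic}), I conclude
\[
\frac{|\cV_n^{(2)}|}{|\cV_n|} = O\bigl( (\rho_k/\rho_k')^n \bigr),
\]
which is exponentially small. The main obstacle is the first step: carefully justifying the Burnside-type reduction, so that the combinatorial factors $m_\lambda$ are correctly identified and colour-orbits properly accounted for. However, since only an exponential upper bound is needed, the crude inequality $V^{(2)}(z) \le (k+1) \sum_{\lambda \ne 1^{k+1}} B_\lambda(z)/z_\lambda$, obtained by overcounting, would already suffice and bypasses this technicality.
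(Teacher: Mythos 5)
Your proposal follows essentially the same route as the paper: map each $\cV_n^{(2)}$-object to a coloured hedron-rooted $k$-tree fixed by a non-identity recolouring, so that $|\cV_n^{(2)}|$ is controlled by the series $B_\lambda$ with $\lambda \ne 1^{k+1}$; show via Equation~\eqref{E:BtoC} and Lemma~\ref{le:analytic} that each such $B_\lambda$ has radius of convergence strictly larger than $\rho_k$; and compare with $|\cV_n| = n[z^n]U(z)$. Two small inaccuracies, neither of which harms the exponential comparison: the marked cycle of a $\cV^{(2)}$-object is a cycle of \emph{hedra} (the front-permutation of the root hedron is only induced by it, via the cyclic permutation of $\ell\ge2$ branches, which is what forces $\lambda\ne 1^{k+1}$), so the multiplicity of the forgetful map onto coloured fixed points is the number of admissible marked hedron-cycles, bounded by $n$ rather than by $k+1$ --- this factor $n$ is what the paper uses; and $|\cV_n|\sim a_k n^{-3/2}\rho_k^{-n}$, so the lower bound $c\rho_k^{-n}$ needs a polynomial correction. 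With the factor $n$ in place the radius of convergence of the upper-bounding series is unchanged, and your conclusion stands.
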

\begin{proof}
	We have to verify that $|\cV_n^{(2)}| /  |\cV_n| \le C \exp(-cn)$ for some constants $C,c>0$ that do not depend on~$n$. 
	
	For this it suffices to show that the radius of convergence of the generating series $V^{(2)}(z)$ is strictly larger than the radius of convergence $\rho_k$ of the generating series $V(z)$. Indeed, if this is the case, then there is an $\epsilon>0$ such that $V^{(2)}(\rho_k + \epsilon)< \infty$ and hence $|\cV_n^{(2)}|(\rho_k + \epsilon)^n \to 0$. As $|\cV_n| = n [z^n]U(z) \sim a_k n^{-3/2} \rho_k^{-n}$ for some fixed $a_k >0$ by Lemma~\ref{le:analytic} (or Equation~\eqref{eq:recover} below), we know that $|\cV_n|(\rho_k + \epsilon/2)^n \to \infty$. So $|\cV_n^{(2)}| /  |\cV_n| = o(1)\left( \frac{\rho_k + \epsilon/2}{\rho_k + \epsilon} \right)^n$ tends exponentially fast to zero as $n$ becomes large.
	
	Let $V \in \cV_n^{(2)}$ be a cycle-pointed unlabelled uncoloured $k$-tree whose cycle-center is a hedron. Then there is a labelled, uncoloured $k$-tree $K$ with an automorphism $\sigma$ and a marked cycle $c$ of $\sigma$ such that $(K,c)$ looks up to relabelling like $V$.  We may view $K$ as rooted at the cycle-center hedron. Hence $K$ consists of a root hedron whose fronts are identified with the root-fronts of $k+1$ front-rooted $k$-trees $C_1, \ldots, C_{k+1}$. If $\sigma$ sends the label of a hedron contained in $C_i$ to the label of a hedron contained in $C_j$, then it already holds that the restriction of $\sigma$ to the label set of $C_i$ is  an isomorphism from $C_i$ to $C_j$. As the cycle center is a hedron, it follows that there are branches $C_{i_1}, \ldots, C_{i_\ell}$ (each having at least $1$ hedron) with $\ell \ge 2$ such that $\sigma$ cyclically permutes the label sets of the branches. That is, $\sigma$ induces an isomorphism from $C_{i_j}$ to $C_{i_{j+1}}$ if $1 \le j < \ell$ and to $C_{i_1}$ if $j = \ell$. Let $K^\text{col}$ denote any fixed front-coloured version of $K$ (such that fronts of any hedron are coloured from $1$ to $k+1$ and fronts that are mirror to each other receive the same colour.) The automorphism $\sigma$ is not required to respect the colouring, but we know that when we relabel the fronts of $K^\text{col}$ according to $\sigma$ then the result $\sigma.K^\text{col}$ must be some coloured version of $K$. Hence there is a bijection $\lambda \in \mathfrak{S}_{k+1}$ such that $\sigma.K^\text{col}$ equals the recoloured version $\lambda.K^\text{col}$ of $K^\text{col}$. Let $A \in [n]$ be the label of the cycle center hedron in $K^\text{col}$ and let $B \in [n]$ be the label of some hedron of $C_{i_j}$ that contains its root front. Then $\lambda$ must send the colour $a_{i_j} \in [k+1]$ of the unique front contained in the hedra (corresponding to) $A$ and $B$ to the colour of the unique front contained  in the hedra (corresponding to) $\sigma(A)=A$ and $\sigma(B)$. Thus $(a_{i_1}, \ldots, a_{i_\ell})$ is one of the disjoint cyclic factors of the permutation $\lambda$. As $\ell \ge 2$ this implies that $\lambda$ does not have cycle type $1^{k+1}$.
	
	Let $K^\text{unl,col}$ denote the result of dropping the labels of $K^\text{col}$ but retaining the colours. We know that $\lambda.K^\text{col}$ is a relabelled version of $K^\text{col}$, so $K^\text{unl,col}$ is invariant under recolouring according to $\lambda$.  Thus $V$ is formed by dropping the colours of  $K^\text{unl,col}$ and cycle-pointing it in one of the at most $n$ ways such that the cycle center is the root hedron. This shows that
	 \begin{align}
	 	|\cV_n^{(2)}| \le n [z^n] \sum_{\lambda \vdash k+1, \lambda \ne 1^{k+1}} B_\lambda(z).
	 \end{align}
	 By Lemma~\ref{le:analytic} and Equation~\eqref{E:BtoC} it follows that the  series $\sum_{n \ge 1}n z^n [z^n] \sum_{\lambda \vdash k+1, \lambda \ne 1^{k+1}} B_\lambda(z)$ has radius of convergence strictly larger than $\rho_k$. This concludes the proof.
\end{proof}

\subsection{Cycle-pointed $k$-trees with a front cycle-center}

\begin{lemma}
	\label{le:v3}
	Theorems~\ref{te:A}, ~\ref{te:B} and \ref{te:C} hold for the uniformly symmetrically cycle pointed $k$-tree from the class $\cV_n^{(3)}$.
\end{lemma}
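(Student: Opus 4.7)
The plan is to mimic the proof of Lemma~\ref{le:v1}: decompose $\cV_n^{(3)}$ via a Burnside-type sampling procedure, identify an exponentially likely dominant configuration, and transfer the geometric statements from Lemma~\ref{le:base}.

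First I would derive a subtraction-free decomposition of the class $\cV^{(3)}$. Following the cycle-pointing framework of \cite{BFKV:07}, a symmetrically cycle-pointed $k$-tree with front cycle-center corresponds to a central front $f$ together with a $C$-object glued at $f$ (a multiset of $\bar{B}$-type branches, one per hedron containing $f$), inside which a cyclic orbit of $\ell \ge 2$ isomorphic $\bar{B}$-branches is marked. Incorporating a front-colouring fixed by a permutation $\pi \in \mathfrak{S}_{k+1}$ and applying Burnside's lemma (Lemma~\ref{le:burn}) together with Lemma~\ref{le:mult}, one expresses $V^{(3)}(z)$ as a sum of non-negative terms indexed by cycle types $\mu \vdash k$, in complete analogy with the derivation of the expansion $C(z) = \sum_\mu C_\mu(z)/z_\mu$ in~\eqref{E:C1}.

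Second, Lemma~\ref{le:bprob} tells us that sampling uniformly from $\cV_n^{(3)}$ is equivalent to first sampling a cycle type $\mu$ at the center front and then sampling a configuration fixed by a chosen representative of $\mu$. By Lemma~\ref{le:analytic}, the series $C_\mu(z)$ and $\bar{B}_\mu(z)$ are analytic at $\rho_k$ for all $\mu \ne 1^k$, so the sampled type is exponentially likely to be $1^k$. Conditional on this dominant type, the object consists of a $C_{1^k}$-object rooted at $f$ in which a cyclic orbit of $\ell \ge 2$ identical $\bar{B}_{1^k}$-sub-branches is marked. Because $\bar{B}_{1^k}(z^\ell)$ has radius of convergence $\rho_k^{1/\ell} > \rho_k$, the total number of hedra in the marked orbit is $O_p(1)$, and the square-root singularity of $C_{1^k}$ from Lemma~\ref{le:analytic} combined with the Gibbs-partition heuristic of~\cite{Gi, Gibb} (applied exactly as in the proof of Lemma~\ref{le:v1}) shows that the remaining unmarked branches contain all but $O_p(1)$ of the $n$ hedra, concentrated in a single giant $\bar{B}_{1^k}$-component.

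Third, Lemma~\ref{le:base} applied to this giant $\bar{B}_{1^k}$-component yields Theorem~\ref{te:A} and Theorem~\ref{te:C}: the small marked orbit and the other bounded-size decorations perturb the rescaled metric by $o(\sqrt{n})$ and are invisible inside any $r_n$-neighbourhood of a uniformly selected vertex outside an exponentially unlikely event. For Theorem~\ref{te:B}, the diameter of an element of $\cV_n^{(3)}$ is bounded by twice the largest diameter among its $\bar{B}_{1^k}$ and $C_{1^k}$ sub-components, each of which satisfies the sub-Gaussian tail bound of Lemma~\ref{le:base}; summing over the possible component-size decompositions of $n$ as in the proof of Lemma~\ref{le:v1} yields the required $C \exp(-cx^2/n)$ estimate. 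The main obstacle lies in the first step: carefully pinning down the subtraction-free decomposition of $\cV^{(3)}$ and verifying that the \emph{at least one automorphism} clause in the definition of symmetric cycle pointing is compatible with Burnside's lemma for the $\mathfrak{S}_{k+1}$-recolouring action without any double counting. Once this decomposition and the identification of $\mu = 1^k$ as the dominant cycle type are established, the probabilistic and geometric analysis follows the pattern of Lemma~\ref{le:v1} with only notational modifications.
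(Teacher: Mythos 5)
Your overall strategy (identify a dominant configuration consisting of one giant $C_{1^k}$/$\bar{B}_{1^k}$ component plus an $O_p(1)$-sized marked part, then transfer Theorems~\ref{te:A}--\ref{te:C} via Lemma~\ref{le:base}) matches the paper's endgame, and your handling of the diameter tail bound and the negligibility of the marked orbit is sound. But the step you yourself flag as ``the main obstacle'' --- obtaining a clean Burnside-type expansion of $V^{(3)}(z)$ over cycle types $\mu \vdash k$ --- is not just a technicality to be checked; it fails as you have set it up, and the paper needs an extra idea to get around it. The problem is that the ``at least one automorphism'' clause couples the marked branches to the unmarked ones: the automorphism $\sigma$ realizing the marked cycle need not preserve any front-colouring, and in general it acts on a fixed colouring $K^{\text{col}}$ as a non-trivial recolouring $\mu \in \mathfrak{S}_k$ of the colours other than that of the center front. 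When $\mu \ne \mathrm{id}$, the \emph{entire} tree --- including the unmarked part --- must be invariant under recolouring by $\mu$, so the unmarked part cannot range freely over all $C$-objects and no product formula of the form $C_\mu(z)\cdot(\text{marked part})_\mu(z)$ describes the whole class. A single application of Lemma~\ref{le:bprob} to all of $\cV_n^{(3)}$, as in your second step, therefore does not go through: Burnside counts fixed points of each recolouring, whereas membership in $\cV_n^{(3)}$ is an existential condition over automorphisms, and different extending automorphisms of the same cycle-pointed object can correspond to different $\mu$.

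The paper's resolution is to split $\cV_n^{(3)} = \cV_n^{(3),\text{sym}} \sqcup \cV_n^{(3),\text{dec}}$, where $\cV_n^{(3),\text{sym}}$ consists of those objects for which \emph{no} extending automorphism is colour-preserving. For that subclass one only needs the crude bound $|\cV^{(3),\text{sym}}_n| \le n\,[z^n]\sum_{\mu \ne 1^k} C_\mu(z)$, which is exponentially negligible by Lemma~\ref{le:analytic}. Only on the complementary ``decoupled'' class does one get the product structure: there $\sigma$ can be chosen to fix the unmarked part pointwise, the marked part is built from $\ell \ge 2$ copies of a single \emph{well-pointed} branch (note that the marked data inside one branch is itself a symmetric cycle-pointing of that branch, giving the series $\bar{B}^{\circ_{\text{w}}}$, not just $\bar{B}$), and a second Burnside argument over colourings of the root front yields $V^{(3),\text{dec}}(z) = \sum_{\mu \vdash k} z_\mu^{-1} C_\mu(z) \sum_{\ell \ge 2}(\bar{B}^{\circ_{\text{w}}})_\mu(z^\ell)$. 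Without this two-stage split your decomposition either undercounts (if you insist the marked orbit consist of branches identical as coloured objects) or loses the product form (if you allow colour-rotating automorphisms). The rest of your argument would then go through as written.
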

\begin{proof}
	Let $V \in \cV_n^{(3)}$ be a cycle-pointed unlabelled uncoloured $k$-tree whose cycle-center is a front. There is a labelled, uncoloured $k$-tree $K$ together with an automorphism $\sigma$ and a marked cycle $c$ of $\sigma$ such that $V$ is the unlabelled version of $(K,c)$.  We consider $K$ as rooted at the cycle-center front. Hence $K$ consists of a set of front-rooted, labelled, uncoloured $k$-trees where the root-front is contained in a unique hedron. We may partition these  into branches that contain hedra of the marked cycle $c$ and branches that do not. Thus $K$ actually consists of two front-rooted components $C^*$ and $C$ that are glued together at their root-fronts, with $C^*$ the subgraph induced by all the branches containing hedra of the cycle $c$ and $C$ the subgraph induced by the branches that do not. We call $C^*$ the marked part and $C$ the unmarked part. If the automorphism $\sigma$ sends the label of a hedron contained in a branch $C_1$ of $K$ to a  label of a hedron contained in another branch $C_2$, then the restriction of $\sigma$ to the label set of $C_1$ is an isomorphism from $C_1$ to $C_2$. In particular, $\sigma$ may be restricted to an automorphism of $C^*$ and consequently also to an automorphism of $C$.
	
	Let us fix a version $K^\text{col}$ of $K$ that is properly front-coloured such that fronts of any hedron are coloured from $1$ to $k+1$ and fronts that are mirror to each other receive the same colour. We additionally that the root-front has colour $k+1$. We know that the result of relabelling  $K^\text{col}$ according to $\sigma$ is a coloured version of $K$, where the root-front still has colour $k+1$. Consequently, there is a bijection $\mu \in \mathfrak{S}_{k}$ such that $\sigma.K^\text{col}$ equals the recoloured version $\mu.K^\text{col}$ of $K^\text{col}$. Let $\cV^{(3), \text{sym}}_n \subset \cV_n^{(3)}$ be the subset of all cycle-pointed $k$-trees where $\sigma$ and $\mu$ may \emph{not} be chosen in such a way that $\mu$ is the identity map. (This does not depend on the choice of $K^\text{col}$.) Let $K^{\text{unl,col}}$ denote the unlabelled, coloured $k$-tree obtained by dropping the labels but retaining the colours of~$K^\text{col}$. As $\sigma.K^\text{col} = \mu.K^\text{col}$ it follows that $K^{\text{unl,col}}$ is invariant under recolouring by $\mu$. Since any unlabelled $k$-tree with $n$ hedra has at most $n$ unlabelled cycle-pointed versions where the cycle-center is a front, it follows that
	\begin{align}
		\label{eq:asdf}
		|\cV^{(3), \text{sym}}_n| \le n [z^n] \sum_{\mu \vdash k, \mu \ne 1^k} C_\mu(z).
	\end{align}
	 By Lemma~\ref{le:analytic}  we know that the series $\sum_{n \ge 1} n [z^n] \sum_{\mu \vdash k, \mu \ne 1^k} C_\mu(z)$ has radius of convergence strictly larger than $\rho_k$. This implies that a uniformly selected element of $\cV^{(3)}_n$ is exponentially unlikely to lie in $\cV^{(3), \text{sym}}_n$. We set $\cV_n^{(3), \text{dec}} := \cV^{(3)}_n \setminus \cV^{(3), \text{sym}}_n$, so that
	 \begin{align}
	 	\label{eq:pp2}
	 	V^{(3)}(z) = V^{(3), \text{sym}}(z) + V^{(3), \text{dec}}(z)
	 \end{align}
 	 with $ V^{(3), \text{sym}}(z) =   \sum_{n \ge 1} |\cV^{(3), \text{sym}}_n| z^n$ and $ V^{(3), \text{dec}}(z) = \sum_{n \ge 1} |\cV^{(3), \text{dec}}_n | z^n $.

 	 \begin{figure}[t]
 	 	\begin{center}
 	 		\includegraphics[scale=1.0]{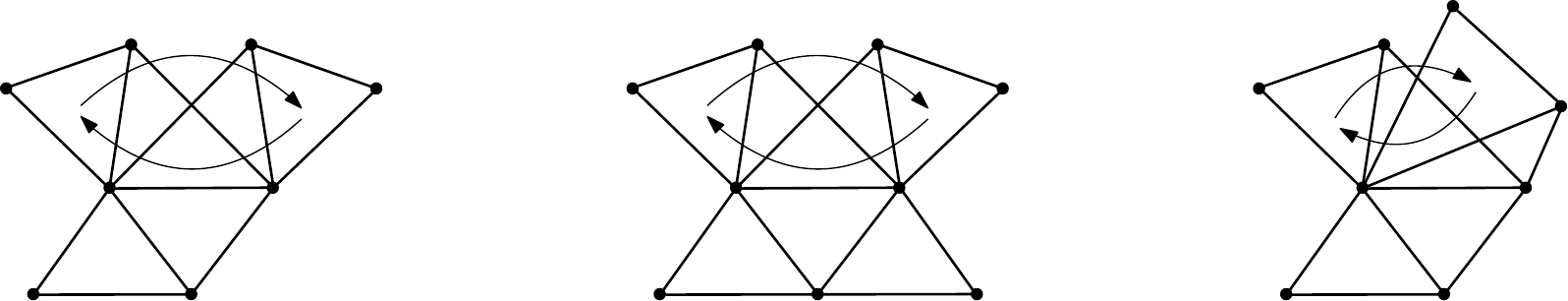}
 	 	\end{center}
 	 	\caption{The $2$-tree on the left is \emph{not} cycle-pointed, since no automorphism has the marked cycle as one of its disjoint cyclic factors. The other two are cycle-pointed with a front as cycle center. The middle one belongs to the  class $\cV_n^{(3), \text{sym}}$ for $n=7$ \label{F:5}} and the one on the right to the class $\cV_n^{(3), \text{dec}}$.
 	 \end{figure}
 	 
 	 Suppose that the unlabelled uncoloured $k$-tree $V$ we considered above lies in the set $\cV_n^{(3), \text{dec}}$.  As $V \in \cV_n^{(3), \text{dec}}$ we may assume that $\sigma$ got chosen in a way that preserves the colouring of the coloured version $K^\text{col}$, that is $\sigma.K^{\text{col}} = K^\text{col}$. We argued above that the automorphism $\sigma$ restricts to an automorphism of the marked part $C^*$ (and  of the remaining part $C$). That is, at least one (and hence all) colourings of the marked part admit a colour-preserving automorphism having the marked cycle as one of its disjoint cyclic factors.  This is a key observation. Arbitrary elements of $\cV_n^{(3)}$ may have a marked part whose marked cycle may only be extended to automorphisms involving some form of rotation of the root hedron (that is, they are not colour preserving) such as the middle $k$-tree of Figure~\ref{F:5}. This imposes symmetry constraints (that is,  invariance under non-trivial recolouring) on the unmarked part. For this reason we could show in~\eqref{eq:asdf} that there are much less elements in $\cV_n^{(3), \text{sym}}$ than in $\cV_n^{(3)}$. For elements of $\cV_n^{(3), \text{dec}}$ there are no such symmetry constraints. If the marked cycle may be extended to an automorphism of the marked part $C^*$ that preserves a front-colouring, then $C$ may be equal to any front-rooted unlabelled $k$-tree such that the total number of hedra of $C$ and $C^*$ sums up to $n$. (The abbreviation ``{dec}'' for ``decoupled'' intends to indicate this.) In fact, we may always choose $\sigma$ in such a way that it pointwisely fixes all hedra of $C$. Note that, given a marked and an unmarked part, there is in general no canonical way to glue them together at the root-front. We will get to this in a moment.

 	 Let us first examine the constraints on the marked part.  Since $\sigma$ preserves the colouring, this means that if we distinguish the vertices of the root-front  of $C^*$ by ordering them linearly, then  $C^*$ consists of identical branches glued together in the unique way according to the order on the root-front. See for example the $k$-tree on the right of Figure~\ref{F:5}. The automorphism $\sigma$ cyclically permutes the label sets of the branches of $C^*$. Let $C_1, \ldots, C_\ell$, $\ell \ge 2$ denote the branches of $C^*$ such that $C_i$ gets sent to $C_{i+1}$ by $\sigma$ if $i < \ell$, an to $C_1$ if $i = \ell$. The disjoint cyclic factor of $\sigma$ that corresponds to the marked cycle must be of the form
 	 \[
 	 (a_{1,1}, \ldots, a_{1,\ell}, a_{2,1}, \ldots, a_{2,\ell}, \quad \ldots \quad, a_{r,1}, \ldots, a_{r,\ell})
 	 \]
 	 for some $r \ge 1$ such that for each $1 \le j \le \ell$ the labels $a_{1,j}, \ldots, a_{r,j}$ correspond to distinct hedra of the branch~$C_j$. Note that the restriction of the power $\sigma^\ell$ to the label set of $C_j$ is an automorphism of $C_j$, and $(a_{j,1}, \ldots, a_{j,\ell})$ is one of its disjoint cyclic factors. Hence, up to hedron labels, $C^*$ is completely described by the number $\ell \ge 2$ of branches together with a single cycle pointed branch $(C_j, (a_{1,j}, \ldots, a_{r,j}))$. See Figure~\ref{F:8} for an illustration of how to construct a marked part in this way. Note that not every marked part constructed in this way is admissible, in particular the example of Figure~\ref{F:8} admits no front-colour preserving automorphism having the marked cycle as one of its disjoint factors. This is the case if and only if the cycle-pointed branch has this property.
 	 
 	 Let $\bar{B}(z)$ be the power series so that $[z^n]\bar{B}(z)$ counts the number of front-rooted unlabelled uncoloured $k$-trees with $n$ hedra where the root-front is contained in a unique hedron. Let
 	 $
 	  \bar{B}^{\circ_\text{w}}(z)
 	 $
 	 count unlabelled uncoloured cycle-pointed branches that admit an automorphism that has the marked cycle as one of its disjoint factors and preserves a given (and hence all) front-colourings. (The ``w'' indicates that they are ``well'' pointed.) The generating series $M(z)$ of the  class $\cM$ of marked parts that are admissible for elements of the class $\cV^{(3), \text{dec}} = \bigcup_{n \ge 1} \cV^{(3), \text{dec}}_n$ is consequently given by
 	 \[
 	 	M(z) = \sum_{\ell \ge 2}\bar{B}^{\circ_\text{w}}(z^\ell),
 	 \]   with the variable $z$ indexing the number of hedra.
 	 
 	 As mentioned before, there may be various ways to glue an unlabelled uncoloured marked part and an unmarked front-rooted unlabelled uncoloured $k$-tree together at the root-front. In order to handle this we introduce colours. Consider the set of all front-colourings of $\cM$-objects such that the root-front receives colour $k+1$.  Note that each branch in a coloured $\cM$-object is coloured identically, as fronts that are mirror to each other receive the same colour. Hence a coloured $\cM$-object is constructed out of copies of a single coloured $\bar{B}^{\circ_\text{w}}$-object. For each cycle type $\mu \vdash k$ let  $(\bar{B}^{\circ_\text{w}})_\mu(z)$ denote the generating series of the class of all colourings of $\bar{B}^{\circ_\text{w}}$ objects such that the root-front receives colour $k+1$.  Likewise, we let $M_\mu(z)$ count $\mu$-invariant coloured $\cM$-objects. Then
 	 \[
 	 	M_\mu(z) = \sum_{\ell \ge 2} (\bar{B}^{\circ_\text{w}})_\mu(z^\ell).
 	 \]
 	 It's easy to see that $(\bar{B}^{\circ_\text{w}})_\mu(z)$ and $\bar{B}_\mu(z)$ have the same radius of convergence.  By Lemma~\ref{le:analytic} it follows that~$M_\mu(z)$ has radius of convergence strictly larger than $\rho_k$. Now, consider the class of colourings of $\cV^{(3), \text{dec}}$-objects where again the root-front is required to receive colour $k+1$. Applying Burnside's Lemma~\ref{le:burn} yields
 	 \begin{align}
 	 \label{eq:pp3}
 	 V^{(3), \text{dec}}(z) = \sum_{\mu \vdash k} z_\mu^{-1} C_\mu(z) \sum_{\ell \ge 2}(\bar{B}^{\circ_\text{w}})_\mu(z^\ell).
	\end{align}
	Using Lemma~\ref{le:analytic} it follows that all summands with $\mu \ne 1^k$ have radius of convergence strictly larger than $\rho_k$. The summand for $\mu=1^k$ represents pairs of a $\bar{B}_{1^k}$-object (that bijectively corresponds to an unlabelled uncoloured $k$-tree rooted at a front of \emph{distinguishable} vertices, see Section~\ref{sec:disting}) and an $M_{1^k}$-object, that are glued together in a \emph{canonical} way. It follows from Lemma~\ref{le:bprob} that there a constants $C,c>0$ such that the  total variation distance between the uniform measure on $\cV_n^{(3)}$ and the uniform measure on $n$-hedron unlabelled, uncoloured, cycle-pointed $k$-trees obtained from  ``$\bar{B}_{1^k}$, $M_{1^k}$''-pairs is bounded by $C \exp(-cn)$ for all $n$. As $M_{1^k}(z)$ has radius of convergence strictly larger than $\rho_k$, it follows easily from the asymptotic expansion of $[z^n]\bar{B}_{1^k}(z)$ that the marked part as asymptotically bounded size. Hence it follows from Lemma~\ref{le:base} that Theorems~\ref{te:A}, ~\ref{te:B} and \ref{te:C} hold for the uniformly symmetrically cycle pointed $k$-tree from the class $\cV_n^{(3)}$.
\end{proof}

\begin{figure}[t]
	\begin{center}
		\includegraphics[scale=1.5]{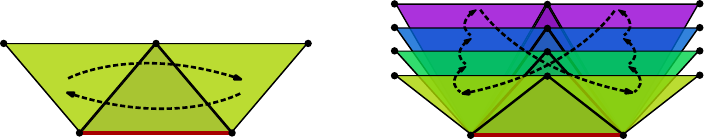}
	\end{center}
	\caption{Constructing a marked part with a cycle of length $8$ out of $4$ copies of a single marked branch with a cycle of length $2$.}
	\label{F:8}
\end{figure}

\subsection{Conclusion}
\label{sec:conclusion}
Theorems~\ref{te:A},~\ref{te:B} and \ref{te:C} follow easily from the decomposition~\eqref{eq:dec} and Lemmas~\ref{le:v1},~\ref{le:v2} and \ref{le:v3}. The generating series we derived in Equations~\eqref{eq:pp1}, \eqref{eq:pp2}, and \eqref{eq:pp3} may be summarized by 
\begin{align}
\label{eq:sum}
zU'(z) = B(z)  + V^{(2)}(z) + V^{(3), \text{sym}}(z) + \sum_{\mu \vdash k} z_\mu^{-1} {C}_\mu(z) \sum_{\ell \ge 2}  (\bar{B}^{\circ_\text{w}})_\mu(z^\ell).
\end{align}
This may be seen as a substraction-free alternative to the dissymmetry equation~\cite[Lem. 6]{GeGai}. The series $V^{(2)}(z)$, $V^{(3), \text{sym}}(z)$ and $\sum_{\mu \vdash k, \mu \ne 1^k} z_\mu^{-1} \bar{B}_\mu(z) \sum_{\ell \ge 2}  (\bar{B}^{\circ_\text{w}})_\mu(z^\ell)$ have radius of convergence strictly larger than~$\rho_k$, and $\bar{B}_{1^k}(z)$ has as dominant singularity of square-root type at $\rho_k$. We may apply Equations~\eqref{E:BbarC1} and \eqref{eq:recur} together with general principles (for example~\cite[Lem. 3.2]{Gibb}) to deduce that
\begin{align}
n[z^n]U(z) \sim \frac{(k \rho_k)^{-k}}{k k!}\left(1 + k\sum_{\ell \ge 2} (\bar{B}^{\circ_w})_{1^k}(\rho_k^\ell)\right)[z^n] \bar{B}_{1^{k+1}}(z).
\end{align}
In general the operations of cycle-pointing and colouring $k$-trees do not commute. However, colouring ``well'' pointed $k$-trees is the same as cycle-pointing coloured $k$-trees. Hence
\begin{align}
(\bar{B}^{\circ_w})_{1^k}(z) = z \bar{B}'_{1^k}(z).
\end{align}
There is an asymptotic expansion
\begin{align}
[z^n]\bar{B}_{1^k}(z) \sim \sqrt{\frac{1 + k \sum_{\ell \ge 2} \bar{B}'_{1^k}(\rho_k^\ell)\rho_k^\ell }{2 \pi k^2}} n^{-3/2} \rho_k^{-n}
\end{align} that may be deduced from Equation~\eqref{eq:recur} using \cite[Thm. 28]{belletal}. We have thus recovered the asymptotic expansion 
\begin{align}
\label{eq:recover}
	[z^n]U(z) \sim \frac{(k \rho_k)^{-k}}{k^2 k! \sqrt{2\pi}}\left(1 + k\sum_{\ell \ge 2} (\bar{B}^{\circ_w})_{1^k}(\rho_k^\ell)\right)^{3/2} n^{-5/2} \rho_k^{-n}
\end{align}
that was proven in~\cite[Thm. 3]{DJ:14} via the dissymmetry equation.

\end{document}